\newtheorem*{theorem*}{Theorem}
\newtheorem*{remark*}{Remark}
\theoremstyle{plain}
\newtheorem{theorem}{Theorem}
\newtheorem{lemma}[theorem]{Lemma} 
\newtheorem{corollary}[theorem]{Corollary}
\newtheorem{proposition}[theorem]{Proposition}
\theoremstyle{definition}
\newtheorem{remark}[theorem]{Remark}
\newtheorem{remarks}[theorem]{Remarks}
\newtheorem{definition}[theorem]{Definition}
\newcommand{\hooklongrightarrow}{\lhook\joinrel\longrightarrow}  
\DeclareMathOperator{\Gal}{Gal}
\DeclareMathOperator{\GL}{GL}
\DeclareMathOperator{\SL}{SL}
\begin{document}
\title{Raising the level at your favorite prime}

\author{Luis Dieulefait}
\thanks{The first author is partially supported by MICINN grant MTM2015-66716-P}

\author{Eduardo Soto}
\thanks{The second author is partially supported by MICINN grant MTM2016-78623-P}

\address{{\sc L. Dieulefait}: Departament de Matem\`atiques i Inform\`atica\\ Universitat de Barcelona, Gran Via de les Corts Catalanes, 585, 08007, Barcelona, Spain}
\email{ldieulefait@ub.edu}
\address{{\sc E. Soto}: Departament de Matem\`atiques i Inform\`atica\\ Universitat de Barcelona, Gran Via de les Corts Catalanes, 585, 08007, Barcelona, Spain}
\email{eduard.soto@ub.edu}

\maketitle

\begin{abstract}
	In this paper we prove a level raising theorem for some weight $2$ trivial
	 character newforms at almost \emph{every} prime $p$. 
	This is done by ignoring the residue characteristic at which the level raising appears. 
\end{abstract}


\section*{Introduction}
	
	For a newform $h$ and a prime $\mathfrak l$ in $\bar{\mathbb Z}$ consider 
	the semisimple $2$ dimensional continuous Galois representation $\bar\rho_{h,\mathfrak l}$
 	with coefficients in 
	$\mathbb F_{\mathfrak l}=\bar{\mathbb Z}/\mathfrak l$
	attached to $h$ and let $\{a_p(h)\}_p\subset \bar{\mathbb Z}$ be the sequence of 
	prime index Fourier coefficients of $h$. 
	Let $f$ and $g$ be newforms of weight $2$ and trivial character. We say that $f$ and 
	$g$ are \emph{Galois-congruent} if there is some
	prime $\mathfrak l$ in $
	\bar{\mathbb Z}$ such that $\bar{\rho}_{f,\mathfrak l}$, $\bar{\rho}_{g,\mathfrak l}$ are 
	isomorphic.	
%
	This is equivalent to 
$$
	a_p(f) \equiv a_p(g) \pmod{\mathfrak l}
$$
	for all but finitely many $p$. 
	In 1990 Ribet proved the following
\begin{theorem*}[K. Ribet]
	Let $f$ be a 
	newform in $S_2(\Gamma_0(N))$ such that the mod $\mathfrak l$ Galois representation
$$
	\bar\rho_{f,\mathfrak l}: \Gal(\overline{\mathbb Q}\mid \mathbb Q)\longrightarrow 
	\GL_2(\mathbb F_\mathfrak l)
$$
	is absolutely irreducible. Let $p\nmid N$ be a prime satisfying
$$
	a_p(f) \equiv \varepsilon(p+1) \pmod {\mathfrak l}
$$
	for some $\varepsilon\in\{\pm1\}$.
	Then there exists a newform $g$ in $S_2(\Gamma_0(pM))$, for some divisor $M$ of $N$ such that 
	$\bar\rho_{f,\mathfrak l}$ is isomorphic to $\bar\rho_{g,\mathfrak l}$. 
	If $2\notin \mathfrak l$ then $g$ can be chosen with $a_p(g)=\varepsilon$.
\end{theorem*}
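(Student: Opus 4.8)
The plan is to recast the statement as a question about Hecke algebras and then settle it with the geometry of modular curves in characteristic $p$; the essential external input will be Ihara's lemma. Throughout, write $\ell$ for the residue characteristic of $\mathfrak l$.

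\emph{Reformulation.} Since $(X-\varepsilon)(X-\varepsilon p)=X^{2}-\varepsilon(p+1)X+p$ (because $\varepsilon^{2}=1$), the hypothesis $a_p(f)\equiv\varepsilon(p+1)\pmod{\mathfrak l}$ says that the two roots of $X^{2}-a_p(f)X+p$ reduce modulo $\mathfrak l$ to $\varepsilon$ and to $\varepsilon p$. Hence the system of Hecke eigenvalues of $f$ together with the assignment $U_p\mapsto\varepsilon$ cuts out a maximal ideal $\mathfrak m$, of residue characteristic $\ell$, in the full Hecke algebra $\mathbb T$ (including $U_p$) acting on $S_2(\Gamma_0(Np))$: it is realized by the $p$-stabilization of $f$ whose $U_p$-eigenvalue is the root congruent to $\varepsilon$. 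By construction the Galois representation attached to $\mathfrak m$ is $\bar\rho_{f,\mathfrak l}$, one has $U_p-\varepsilon\in\mathfrak m$, and $\mathfrak m$ lies in the support of the $p$-\emph{old} subspace of $S_2(\Gamma_0(Np))$. It therefore suffices to prove that $\mathfrak m$ also lies in the support of the $p$-\emph{new} subspace $S_2(\Gamma_0(Np))^{p\text{-new}}$: then there is a newform $g$, new at $p$, of some level $Mp$ with $M\mid N$, whose system of eigenvalues reduces into $\mathfrak m$, and such a $g$ satisfies $\bar\rho_{g,\mathfrak l}\cong\bar\rho_{f,\mathfrak l}$ (the semisimplifications agree and $\bar\rho_{f,\mathfrak l}$ is irreducible); moreover, being special at $p$ of weight $2$ and trivial character, it has $a_p(g)=\pm1\equiv\varepsilon\pmod{\mathfrak l}$, so $a_p(g)=\varepsilon$ as soon as $\ell\ne2$.

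\emph{A geometric model for the new part.} I would realize $S_2(\Gamma_0(Np))^{p\text{-new}}$ inside the Jacobian $J=J_0(Np)$, whose N\'eron model over $\mathbb Z_p$ is semistable since $p$ divides $Np$ exactly once: by Deligne--Rapoport the special fibre of $X_0(Np)$ is two copies of the smooth curve $X_0(N)_{\mathbb F_p}$ glued transversally at the supersingular points. Grothendieck's monodromy theory then produces the character group $X$ of the toric part of $\mathcal J_{\mathbb F_p}$: a finite free $\mathbb Z$-module on which $\mathbb T$ acts faithfully through its $p$-new quotient, with $X\otimes\mathbb Q$ Hecke-equivariantly isomorphic to $S_2(\Gamma_0(Np))^{p\text{-new}}$ --- equivalently, via Eichler and Jacquet--Langlands, to the degree-zero part of the supersingular module of the quaternion algebra ramified exactly at $p$ and $\infty$ and of level $N$. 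Grothendieck's exact sequence moreover exhibits the $\ell$-adic Tate module of the $p$-new quotient of $J$, as a $G_{\mathbb Q_p}$-module, as an extension of an unramified module by a Tate twist of one, both built from $X\otimes\mathbb Z_\ell$ --- the local expression of the fact that a $p$-new form is special at $p$. Since $X$ is a faithful module over the $p$-new Hecke algebra, Nakayama's lemma yields the equivalence: $\mathfrak m$ lies in the $p$-new support if and only if $X/\mathfrak m X\ne0$. The theorem is thereby reduced to showing that the congruence forces $X\otimes\mathbb F_\ell$ to be supported at $\mathfrak m$.

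\emph{The crux.} Grothendieck's monodromy pairing embeds $X$, with finite cokernel, into its $\mathbb Z$-dual, the cokernel being the component group $\Phi$ of $\mathcal J_{\mathbb F_p}$ at $p$; a multiplicity-one argument then reduces $X/\mathfrak m X\ne0$ to $\Phi/\mathfrak m\Phi\ne0$. To establish the latter one computes $\Phi$ and the pairing from the Deligne--Rapoport description of the special fibre --- which writes everything in terms of the supersingular divisors on $X_0(N)_{\mathbb F_p}$ and the two degeneracy maps $X_0(Np)\to X_0(N)$ --- and here Ihara's lemma is decisive: because $\bar\rho_{f,\mathfrak l}$ is absolutely irreducible, $\mathfrak m$ is non-Eisenstein, and Ihara's lemma guarantees that the maps linking the homology of $X_0(N)$, the homology of $X_0(Np)$ and the supersingular module remain exact (indeed jointly surjective where needed) after localizing at $\mathfrak m$. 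Feeding in the congruence, which is precisely the numerical condition placing $\mathfrak m$ in the support of the reduction mod $\mathfrak l$ of the supersingular module, one gets $\Phi/\mathfrak m\Phi\ne0$, hence $X/\mathfrak m X\ne0$. Making this comparison precise --- invoking Ihara's lemma and multiplicity one correctly and keeping track of the Eisenstein locus throughout --- is the heart of the argument, and the step I expect to be the main obstacle.

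\emph{Caveats.} Ihara's lemma and mod-$\ell$ multiplicity one are cleanest for $\ell$ odd, and often one also wants $\ell\nmid N$ and $\ell\ne p$; the remaining cases --- $\ell\in\{2,3\}$, $\ell\mid N$, and especially $\ell=p$, where the \'etale analysis at $p$ must be replaced by a crystalline one --- are handled by the now-standard refinements, or else one runs the whole argument on the quaternionic side, where the supersingular module is visibly the correct integral lattice and $U_p$ is the Atkin--Lehner involution $w_p$. The appearance of a divisor $M$ of $N$, rather than $N$ itself, merely absorbs whatever level lowering (Mazur's principle, Ribet's lowering-the-level theorem) is available at primes dividing $N$: at worst $Mp$ is the exact level of the newform produced.
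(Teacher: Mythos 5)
First, a point of orientation: the paper does not prove this statement --- it is Ribet's level-raising theorem, quoted with a citation to \cite{Ribet} and used as a black box --- so your sketch can only be measured against Ribet's original argument. In outline you have reproduced that argument: reduce to showing that the non-Eisenstein maximal ideal $\mathfrak m$ (cut out by the eigensystem of $f$ together with $U_p\mapsto\varepsilon$) lies in the support of the $p$-new quotient of the Hecke algebra of level $Np$; realize that quotient faithfully on the character group $X$ of the toric part of $J_0(Np)_{\mathbb F_p}$, i.e.\ on the supersingular module; and invoke Ihara's lemma, via absolute irreducibility of $\bar\rho_{f,\mathfrak l}$, to control the $p$-old part after localization at $\mathfrak m$. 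Your reformulation paragraph and the deduction $a_p(g)=\pm1\equiv\varepsilon$, hence $a_p(g)=\varepsilon$ when $\ell\neq2$, are correct.

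The crux paragraph, however, fails at exactly the point where the theorem has to be proved. You propose to reduce $X/\mathfrak m X\neq0$ to $\Phi/\mathfrak m\Phi\neq0$, where $\Phi$ is the component group of $J_0(Np)$ at $p$, and then to establish the latter. The reduction is logically sound ($\Phi$ is a quotient of $\operatorname{Hom}(X,\mathbb Z)$, so $\Phi_{\mathfrak m}\neq0$ would force $X_{\mathfrak m}\neq0$), but the target statement is false: $\Phi$ is \emph{Eisenstein} (a theorem of Ribet --- indeed the vanishing $\Phi_{\mathfrak m}=0$ for non-Eisenstein $\mathfrak m$ is precisely what he exploits in level \emph{lowering}), so for your $\mathfrak m$ this route is dead on arrival. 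Compounding this, the sentence asserting that the congruence ``is precisely the numerical condition placing $\mathfrak m$ in the support of the supersingular module'' assumes the conclusion: that the congruence implies $\mathfrak m$ lies in that support \emph{is} the theorem. What actually closes Ribet's argument, and what is missing from your sketch, is the intersection-theoretic computation on the Deligne--Rapoport model (equivalently, Eichler's mass formula): the composite of the two degeneracy-induced maps between $H_1(X_0(N),\mathbb Z)^{2}$ and the supersingular module is given by the matrix $\left(\begin{smallmatrix} p+1 & T_p\\ T_p & p+1\end{smallmatrix}\right)$, of determinant $(p+1)^2-T_p^2=-\bigl(T_p-(p+1)\bigr)\bigl(T_p+(p+1)\bigr)$, which lies in $\mathfrak m$ exactly under the hypothesis $a_p(f)\equiv\varepsilon(p+1)\pmod{\mathfrak l}$; combined with the surjectivity statements that Ihara's lemma provides at non-Eisenstein $\mathfrak m$, this non-invertibility is what forces the $p$-old and $p$-new parts to meet at $\mathfrak m$. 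Without that computation the congruence hypothesis is never genuinely used, so as written the proposal has a real gap.
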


	Hence under some conditions one can \emph{raise the level} of $f$ at $p$. 
	That is, there is a newform $g$ Galois-congruent to $f$ with level divisible by $p$ once.
	When considering level-raisings of $f$ at $p$ we will tacitly assume that $p$ is 
	\emph{not in the level} of $f$.
	In this paper we do level raising at \emph{every} $p > 2$ by admitting congruences at
	any  prime $\mathfrak l$. 
	More precisely, we prove the following.

\begin{theorem}\label{thm1}
	Let $f$ be a newform in $S_2(N)$ and let $p$ be a prime not dividing $N$.
	Assume that 
	
	$\boldsymbol{(AbsIrr)}$ $\bar\rho_{f,\mathfrak l}$ is absolutely irreducible for every $\mathfrak l$.
	
	$\boldsymbol{(a_2)}$ If $p=2$ assume that $a_2(f)^2 \neq 8$.\\
	Then there exists some $M$ dividing $N$ and some newform $g$ in $S_2(Mp)$ such 
	that $f$ and $g$ are Galois-congruent.
\end{theorem}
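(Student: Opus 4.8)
The plan is to get to Ribet's theorem essentially for free, by producing a single prime $\mathfrak l$ of $\bar{\mathbb Z}$ at which its numerical hypothesis is met; the slogan ``ignore the residue characteristic'' means precisely that we will not control which $\mathfrak l$ (equivalently, which rational prime below it) comes out, and we do not need to, because $\boldsymbol{(AbsIrr)}$ holds at \emph{every} $\mathfrak l$.

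Concretely, I would look at the algebraic integer
$$
\beta:=a_p(f)^2-(p+1)^2=\bigl(a_p(f)-(p+1)\bigr)\bigl(a_p(f)+(p+1)\bigr)\in\bar{\mathbb Z},
$$
and set $K=\mathbb Q(a_p(f))$, a totally real number field (the character of $f$ is trivial). The first step is to check $\beta\neq 0$: by the Ramanujan--Petersson bound (Deligne), $\sigma(a_p(f))^2\le 4p<(p+1)^2$ for every embedding $\sigma\colon K\hookrightarrow\mathbb R$, so $\sigma(\beta)\in[-(p+1)^2,-(p-1)^2]$; in particular $\sigma(\beta)<0$ for all $\sigma$, hence $\beta\neq 0$ and $|\N_{K/\mathbb Q}(\beta)|=\prod_\sigma|\sigma(\beta)|\ge (p-1)^{2[K:\mathbb Q]}$.

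The second step is to rule out that $\beta$ is a unit of $\bar{\mathbb Z}$ (equivalently of $\mathcal O_K$). If $p\ge 3$ this is automatic, since $|\N_{K/\mathbb Q}(\beta)|\ge (p-1)^{2[K:\mathbb Q]}\ge 4$. If $p=2$ the same estimate reads $|\N_{K/\mathbb Q}(\beta)|=\prod_\sigma\bigl(9-\sigma(a_2(f))^2\bigr)\ge 1$, with equality precisely when $\sigma(a_2(f))^2=8$ for all $\sigma$, i.e.\ when $a_2(f)^2=8$ --- and this is exactly the case excluded by $\boldsymbol{(a_2)}$. Thus in every case $\beta$ is a nonzero non-unit, so it lies in some maximal ideal $\mathfrak l\subset\bar{\mathbb Z}$; since $\mathfrak l$ is prime and $\beta$ factors as above, one of its two factors lies in $\mathfrak l$, i.e.\ there is $\varepsilon\in\{\pm1\}$ with $a_p(f)\equiv\varepsilon(p+1)\pmod{\mathfrak l}$.

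Finally I would feed this into Ribet's theorem. By $\boldsymbol{(AbsIrr)}$ the representation $\bar\rho_{f,\mathfrak l}$ is absolutely irreducible, and $p\nmid N$ by hypothesis, so Ribet's theorem produces a divisor $M\mid N$ and a newform $g\in S_2(Mp)$ with $\bar\rho_{g,\mathfrak l}\cong\bar\rho_{f,\mathfrak l}$, whence $f$ and $g$ are Galois-congruent; this is the assertion. The two places where something genuine happens are the use of the Ramanujan bound (which is what forces $\beta\neq 0$ for \emph{all} $p$ --- the true content of raising the level at your favorite prime) and the unit analysis in the case $p=2$, which is the sole reason for hypothesis $\boldsymbol{(a_2)}$. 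I would also remark that if the prime $\mathfrak l$ so obtained happens to divide $2$ (or $p$), one retains from Ribet's theorem only the isomorphism $\bar\rho_{g,\mathfrak l}\cong\bar\rho_{f,\mathfrak l}$ and not the sharper $a_p(g)=\varepsilon$; but the isomorphism alone already yields Galois-congruence, which is all Theorem~\ref{thm1} requires.
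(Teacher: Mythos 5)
Your proposal is correct and follows essentially the same route as the paper: the paper's proof applies its Lemma 2.2 (the norm estimate via the Hasse--Weil bound, with the unit case isolated as $p=2$, $a_2^2=8$) to conclude that $(p+1+a_p)(p+1-a_p)$ lies in some maximal ideal $\mathfrak l$ of $\bar{\mathbb Z}$, and then invokes Ribet's theorem under $\boldsymbol{(AbsIrr)}$, exactly as you do. Your added remark that in residue characteristic $2$ one only retains the isomorphism (not the sign of $a_p(g)$) matches the paper's treatment as well.
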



	We proof Theorem \ref{thm1} and a variant of it in section \ref{proofs}. 
	A theorem of B. Mazur implies that Theorem \ref{thm1} applies to \emph{most} of the
	rational elliptic curves. We exhibit an infinite family of
	modular forms satisfying $\boldsymbol{(AbsIrr)}$ with coefficient fields constant equal to 
	$\mathbb Q$, see section \ref{section curves}
\begin{remark*}
	It is worth remarking the existence of  infinite families with coefficient fields of unbounded degree satisfying all of the condition $\boldsymbol{(AbsIrr)}$, see \cite{DieulefaitWiese}.
\end{remark*}	
	Lemma \ref{sign} together with Ribet's theorem imply that we can 
	\emph{choose the sign} of $a_p(g)$ when the congruence of $f$ and $g$ is in odd 
	characteristic.  An obstruction appears
	in characteristic $2$ since Ribet's methods identify $1, -1$ mod $2$. Le Hung and Li 
	\cite{LeHungLi} have recently provided a solution to this problem for some $f$ arising
	from elliptic curves using $2$-adic modularity theorems of \cite{Allen} for the ordinary 
	case and quaternion algebras for the supersingular case. In this paper we treat the
	ordinary case.
	
\begin{theorem}\label{thm2}
	Let $f$ be a newform in $S_2(N)$, let $p$ be a prime not dividing $6N$ and choose a sing
	$\varepsilon  \in \{\pm 1\}$. 
	Assume that $f$ satisfies 	$\boldsymbol{(AbsIrr)}$ and for every $\mathfrak l\ni 2$ assume that 
	
	$\boldsymbol{(DiehReal)}$ $\bar\rho_{f,\mathfrak l}$ has dihedral image induced from a 
	real quadratic extension,
	
	$\boldsymbol{(2Ord)}$ $\bar\rho_{f,\mathfrak l}\vert_{G_{2}}\simeq
	\left(
	\begin{array}{cc}
	1	&*\\
		&1	
	\end{array}
	\right)$.\\
	Then there exists some $M$ dividing $N$ and some newform $g$ in $S_2(Mp)$ such 
	that $f$ and $g$ are Galois-congruent and $a_p(g) = \varepsilon$.
\end{theorem}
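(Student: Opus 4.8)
The plan is to split the argument according to whether the level-raising congruence between $f$ and $g$ can be realised in \emph{odd} residue characteristic, the extra hypotheses $\boldsymbol{(DihReal)}$ and $\boldsymbol{(2Ord)}$ being needed only to handle the residue characteristic $2$ case. Set $\delta := a_p(f) - \varepsilon(p+1) \in \mathbb{Z}[f]\subset\bar{\mathbb{Z}}$. Since $p\nmid 6N$ we have $p\geq 5$, so the Ramanujan--Petersson bound $|\iota(a_p(f))|\leq 2\sqrt p < p+1$ for every embedding $\iota$ forces $\delta\neq 0$, hence $\N_{\mathbb{Q}(f)/\mathbb{Q}}(\delta)\neq 0$. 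If this norm has an odd prime divisor, pick a prime $\mathfrak l$ of $\bar{\mathbb{Z}}$ dividing it and dividing $\delta$, so that $a_p(f)\equiv\varepsilon(p+1)\pmod{\mathfrak l}$ with $2\notin\mathfrak l$; by $\boldsymbol{(AbsIrr)}$ the representation $\bar\rho_{f,\mathfrak l}$ is absolutely irreducible, so Ribet's theorem in the form quoted above (with its clause $2\notin\mathfrak l$), together with Lemma~\ref{sign} to pin down the sign, produces a newform $g\in S_2(Mp)$, $M\mid N$, with $\bar\rho_{g,\mathfrak l}\simeq\bar\rho_{f,\mathfrak l}$ and $a_p(g)=\varepsilon$. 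We are thus reduced to the case in which every prime of $\bar{\mathbb{Z}}$ dividing $\delta$ lies above $2$.

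In that case fix such a prime $\mathfrak l\mid 2$ and write $\bar\rho:=\bar\rho_{f,\mathfrak l}$, absolutely irreducible by $\boldsymbol{(AbsIrr)}$. Since $a_p(f)\equiv\varepsilon(p+1)\pmod{\mathfrak l}$ (both sides are $\equiv 0$, as $\mathfrak l\mid 2$ and $p$ is odd), Ribet's level-raising arguments still yield a maximal ideal of the $p$-new Hecke algebra of some level $Mp$, $M\mid N$, with residual representation $\bar\rho$; in particular $\bar\rho$ is modular, new at $p$, of level $Mp$. By $\boldsymbol{(DihReal)}$ we may write $\bar\rho\simeq\Ind_{G_K}^{G_{\mathbb Q}}\bar\chi$ for a character $\bar\chi$ of $G_K$ with $K$ real quadratic, and by $\boldsymbol{(2Ord)}$ the restriction $\bar\rho|_{G_2}$ is unipotent, in particular nearly ordinary at $2$. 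This is precisely the residual situation covered by the $2$-adic modularity theorem of Allen \cite{Allen} invoked in \cite{LeHungLi} for the ordinary case.

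It remains to produce a characteristic $0$ lift of $\bar\rho$ of the correct shape. I would set up a global deformation problem for $\bar\rho$ with fixed determinant $\chi_{\mathrm{cyc}}$ and Hodge--Tate weights $\{0,1\}$, imposing: nearly ordinary (unipotent) at $2$, which is possible by $\boldsymbol{(2Ord)}$; minimally ramified at the primes dividing $M$; unramified/finite flat at all other primes away from $p$; and at $p$ the \emph{special} (Steinberg) condition whose unramified Frobenius eigenvalue is $\varepsilon$, i.e.\ $\rho|_{G_p}$ an extension of an unramified character $\psi$ with $\psi(\mathrm{Frob}_p)=\varepsilon$ by $\psi\chi_{\mathrm{cyc}}$. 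Each of these is a liftable local condition, and a Ramakrishna/B\"ockle-type argument (allowing auxiliary ramification to kill the global obstruction in $H^2$) produces a geometric lift $\rho\colon G_{\mathbb Q}\to\GL_2(\bar{\mathbb Q}_2)$ with these properties; alternatively, one may start from $\rho_{g_0,\mathfrak l}$ for a form $g_0$ obtained by raising the level of $f$ at $p$ in characteristic $2$ and modify only its local behaviour at $p$ so as to switch the sign of the Steinberg parameter. Allen's theorem then shows $\rho$ is modular, and since $\rho$ is special at $p$ with Frobenius eigenvalue $\varepsilon$, nearly ordinary at $2$ and of Hodge--Tate weights $\{0,1\}$ with trivial nebentypus, the associated newform $g$ lies in $S_2(Mp)$ with $p\,\|\,Mp$, is Galois-congruent to $f$ through $\mathfrak l$, and satisfies $a_p(g)=\varepsilon$.

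I expect the main obstacle to be the verification, in residue characteristic $2$, of the hypotheses of the automorphy lifting theorem: the ``big image''/adequacy conditions are genuinely delicate because $\GL_2(\mathbb F_2)$ is small, which is exactly why $\boldsymbol{(DihReal)}$ is imposed, so that one can descend to $K$ and argue with the abelian character $\bar\chi$. A secondary point requiring care is to check that the sign-$\varepsilon$ special deformation condition at $p$ is liftable and compatible with the rest of the global deformation datum, and that the resulting $\rho$ is genuinely new (not old) at $p$, so that the level of $g$ is exactly $Mp$ and not a proper divisor.
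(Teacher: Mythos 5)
Your overall architecture is the same as the paper's: split according to whether the congruence $a_p(f)\equiv\varepsilon(p+1)$ can be realized at a prime $\mathfrak l$ of odd residue characteristic (where Ribet's theorem already controls the sign) or only at primes above $2$ (where one must work harder). The odd branch is essentially the paper's argument; one small point you should make explicit is that you need $\delta=a_p(f)-\varepsilon(p+1)$ to be a \emph{non-unit}, not merely nonzero, so that in the residual case a prime $\mathfrak l\ni 2$ dividing $\delta$ actually exists --- this is exactly Lemma \ref{sign}(b), which requires $p>3$ and is one of the reasons for the hypothesis $p\nmid 6N$.

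In the characteristic-$2$ branch, however, your sketch has a concrete gap. The paper does not re-run the deformation-theoretic machinery from scratch: it quotes Theorem 4.2 of \cite{LeHungLi} (which rests on Allen's $2$-adic modularity theorem and is where $\boldsymbol{(DiehReal)}$ and $\boldsymbol{(2Ord)}$ are consumed), and that theorem produces a newform of level $Npq^{\alpha}$, where $q$ is an \emph{auxiliary prime} --- extra ramification is unavoidable in this argument, as you yourself concede when you ``allow auxiliary ramification to kill the global obstruction.'' Your proposal never removes this auxiliary ramification, so as written you land in $S_2(Mpq^{\alpha})$, not $S_2(Mp)$. The paper handles this with Proposition \ref{twist}: since $\bar\rho_{g,\mathfrak l}$ is unramified at $q$ and $\bar\rho_{g,\mathfrak l}(Frob_q)$ has distinct eigenvalues, either $g$ or its twist by the quadratic character $\genfrac (){}{0}{\cdot}{q}$ has level $Mp$. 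Crucially, this twist multiplies $a_p(g)$ by $\genfrac (){}{0}{p}{q}$, so to preserve the prescribed sign $\varepsilon$ one must choose the auxiliary prime $q$ to split in $\mathbb Q(\sqrt p)$ --- the existence of such $q$ (positive density) is itself a Chebotarev argument using that $E$ is unramified at $p$ and that the quadratic field $K$ attached to the dihedral image is real, hence $\neq\mathbb Q(i)$. Both the removal of the auxiliary level and the compatibility of that removal with $a_p(g)=\varepsilon$ are missing from your argument, and the second point is precisely where the hypothesis that $K$ be \emph{real} (and the condition $q\equiv 3\pmod 4$ in the definition of auxiliary prime) does genuine work, beyond the ``big image'' role you assign to $\boldsymbol{(DiehReal)}$.
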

	
	We deal with this obstruction in section \ref{mod2} following techniques in \cite{LeHungLi}.

%
%

%
%
%
	
	Let $E/\mathbb Q$ be an elliptic curve. Modularity theorems attach to $E$ a newform 
	$f(E)$ such that $\bar{\rho}_{f,\mathfrak l} \simeq E[\ell] \otimes_{\mathbb{F}_\ell}
	\mathbb F_{\mathfrak l}$ modulo semisimplification, for every $\mathfrak l$. We obtain
	an application to elliptic curves.

\begin{theorem}\label{thm3}
	Let $E/\mathbb Q$ be an elliptic curve such that
	\begin{itemize}
		\item$E$ has no rational $q$-isogeny for every $q$ prime,
		\item $\mathbb Q(E[2])$ has degree $6$ over $\mathbb Q$. 
	\end{itemize}
	Let $p$ be a prime of good reduction.
	Then there exists some divisor $M$ of the conductor of $E$ and some newform $g$ in 
	$S_2(Mp)$  such that $f(E)$ and $g$ are Galois-congruent. Let $\varepsilon\in \{\pm1\}$.
	Assume further that 
	\begin{itemize}
	\item $p\geq 5$,
	\item $E$ has good or multiplicative reduction at $2$ and
	\item $E$ has positive discriminant
	\end{itemize} then g can be chosen with 
	$a_p(g) = \varepsilon$.
\end{theorem}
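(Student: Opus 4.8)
The plan is to deduce both assertions from Theorems~\ref{thm1} and~\ref{thm2} applied to $f=f(E)$; the only real work is translating the geometric hypotheses on $E$ into the representation-theoretic hypotheses $\boldsymbol{(AbsIrr)}$, $\boldsymbol{(a_2)}$, $\boldsymbol{(DiehReal)}$ and $\boldsymbol{(2Ord)}$. Throughout one uses that for a prime $\mathfrak l$ of residue characteristic $\ell$ the representation $\bar\rho_{f(E),\mathfrak l}$ agrees, up to semisimplification, with $E[\ell]\otimes_{\mathbb F_\ell}\mathbb F_{\mathfrak l}$, so that irreducibility, absolute irreducibility, the image up to conjugacy, and the shape of the restriction to a decomposition group all transfer between $\bar\rho_{E,\ell}$ and $\bar\rho_{f(E),\mathfrak l}$. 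Since $p$ is a prime of good reduction it does not divide the conductor $N$ of $E$, which is the level of $f(E)$.

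First I would verify $\boldsymbol{(AbsIrr)}$. For $\ell=2$ the hypothesis $[\mathbb Q(E[2]):\mathbb Q]=6$ says exactly that $\bar\rho_{E,2}$ is surjective onto $\GL_2(\mathbb F_2)\cong S_3$, and the natural two-dimensional $\mathbb F_2$-module of $S_3$ is absolutely irreducible (over $\bar{\mathbb F}_2$ it is the induction of a nontrivial character of the $3$-Sylow subgroup), so $\bar\rho_{f(E),\mathfrak l}$ is absolutely irreducible for every $\mathfrak l\mid 2$. For odd $\ell$, the absence of a rational $\ell$-isogeny is precisely the statement that $\bar\rho_{E,\ell}$ has no $G_{\mathbb Q}$-stable line, i.e.\ is irreducible over $\mathbb F_\ell$. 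A two-dimensional representation that is irreducible but not absolutely irreducible becomes, over a quadratic extension of the coefficient field, a sum $\chi\oplus\chi^{\sigma}$ of two Galois-conjugate characters; complex conjugation $c$ would then act with the two eigenvalues $\chi(c),\chi(c)^{\ell}$, which are equal because $\chi(c)^2=1$ forces $\chi(c)\in\{\pm1\}\subset\mathbb F_\ell$, contradicting $\det\bar\rho_{E,\ell}(c)=\chi_{\mathrm{cyc}}(c)=-1$ for odd $\ell$. Hence $\bar\rho_{E,\ell}$ is absolutely irreducible for every odd $\ell$ as well, and $\boldsymbol{(AbsIrr)}$ holds. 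When $p=2$ the Hasse bound gives $a_2(E)\in\{0,\pm1,\pm2\}$, so $a_2(f)^2\in\{0,1,4\}\neq8$ and $\boldsymbol{(a_2)}$ is automatic; Theorem~\ref{thm1} then yields a divisor $M\mid N$ and a newform $g\in S_2(Mp)$ Galois-congruent to $f(E)$, which is the first assertion.

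For the refined statement I would invoke Theorem~\ref{thm2}. The conditions $p\geq5$ and good reduction at $p$ give $p\nmid 6N$, and $\boldsymbol{(AbsIrr)}$ is already in hand, so it remains to check $\boldsymbol{(DiehReal)}$ and $\boldsymbol{(2Ord)}$ at every $\mathfrak l\mid 2$. Since the image of $\bar\rho_{E,2}$ is $S_3$, it is dihedral, induced from its unique subgroup of index two; the associated quadratic field is the unique quadratic subfield of $\mathbb Q(E[2])$, namely $\mathbb Q(\sqrt{\Delta_E})$ (the discriminant of the $2$-division polynomial differs from $\Delta_E$ by a positive square), and this field is real precisely when $\Delta_E>0$, which is the positive-discriminant hypothesis; so $\boldsymbol{(DiehReal)}$ holds. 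For $\boldsymbol{(2Ord)}$ one uses the reduction hypothesis at $2$: if $E$ has multiplicative or good ordinary reduction at $2$, then $\bar\rho_{E,2}\vert_{G_2}$ is reducible, an extension of an unramified character by an unramified twist of the mod $2$ cyclotomic character; modulo $2$ both diagonal characters take values in $\mathbb F_2^{\times}=\{1\}$ and are therefore trivial, so $\bar\rho_{E,2}\vert_{G_2}$ is unipotent of the required shape $\left(\begin{smallmatrix}1&*\\0&1\end{smallmatrix}\right)$. Theorem~\ref{thm2} then produces $g\in S_2(Mp)$ Galois-congruent to $f(E)$ with $a_p(g)=\varepsilon$.

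The local computations at $2$ are routine. The step I expect to need the most care is the verification of $\boldsymbol{(AbsIrr)}$ at every odd prime, and specifically the exclusion of a nonsplit-Cartan image: it is crucial that $E$ is defined over $\mathbb Q$, so that for odd $\ell$ the determinant of complex conjugation equals $-1$, which is exactly what rules out the ``irreducible but not absolutely irreducible'' possibility uniformly in $\ell$. The analogous obstruction at $\ell=2$, where the determinant argument is vacuous, is handled instead by the degree-$6$ hypothesis, and the reduction hypothesis at $2$ is precisely what places $\bar\rho_{E,2}\vert_{G_2}$ in the ordinary form demanded by $\boldsymbol{(2Ord)}$.
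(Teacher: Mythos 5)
Your reduction of Theorem~\ref{thm3} to Theorems~\ref{thm1} and~\ref{thm2} is exactly the route the paper intends (it never writes out a separate proof of Theorem~\ref{thm3}; the ingredients are the lemma of Section~\ref{section curves} identifying $\boldsymbol{(AbsIrr)}$ with ``no rational isogeny plus $[\mathbb Q(E[2]):\mathbb Q]=6$'', together with the two main theorems). Your verifications of $\boldsymbol{(AbsIrr)}$ (surjectivity onto $\GL_2(\mathbb F_2)\cong S_3$ at $2$, and the complex-conjugation argument excluding the irreducible-but-not-absolutely-irreducible case at odd $\ell$), of $\boldsymbol{(a_2)}$ via the Hasse bound, and of $\boldsymbol{(DiehReal)}$ via $K=\mathbb Q(\sqrt{\Delta_E})$ are all correct and agree with the paper.

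There is, however, one step that does not cover the hypothesis as literally stated: for $\boldsymbol{(2Ord)}$ you argue under ``multiplicative or good \emph{ordinary} reduction at $2$'', whereas Theorem~\ref{thm3} only assumes ``good or multiplicative reduction at $2$'', which includes good \emph{supersingular} reduction. A curve such as $37a$ (no rational isogenies, $[\mathbb Q(E[2]):\mathbb Q]=6$, positive discriminant, good reduction at $2$ with $a_2=-2$) satisfies every stated hypothesis, yet for it $E[2]\vert_{G_2}$ is irreducible over $\mathbb F_2$ (inertia acts through the level-$2$ fundamental character of order $3$), so $\boldsymbol{(2Ord)}$ fails and Theorem~\ref{thm2} simply does not apply; your unipotence computation is valid only in the ordinary and multiplicative cases. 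This is best viewed as an imprecision in the statement of Theorem~\ref{thm3} rather than a defect of your strategy: the introduction explicitly says the paper treats only the ordinary case (the supersingular case requiring the quaternionic methods of Le Hung--Li), so the intended hypothesis is presumably ``good ordinary or multiplicative reduction at $2$''. You should either add that word to the hypothesis or note explicitly that the supersingular case is excluded; with that reading your proof is complete. One further cosmetic point: the unramified quotient character is trivial mod $\mathfrak l$ not because it lands in $\mathbb F_2^\times$ a priori, but because its value on $Frob_2$ is the unit root $a_2 \bmod \mathfrak l$ with $a_2$ an odd rational integer (resp.\ $\pm 1$ in the multiplicative case), which reduces to $1$ in the odd-order group $\mathbb F_{\mathfrak l}^\times$ --- your conclusion is right, and this is the argument that justifies it.
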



\subsection*{Notation} 
	Let $\bar{\mathbb Q}$ denote the algebraic closure of $\mathbb Q$ in $\mathbb C$. 
	Let $\bar{\mathbb Z}$ be the ring of algebraic integers contained in $\bar{\mathbb Q}$. 
	We use $p, q,\ell$ to denote rational primes and $\mathfrak l$, $\mathfrak l'$ to denote 
	primes of $\bar{\mathbb Z}$. We use \emph{prime} of $\bar{\mathbb Z}$ to refer 
	to maximal ideals of $\bar{\mathbb Z}$, i.e. non-zero prime ideals.
	We denote by $\mathbb F_{\mathfrak l}$ the residue field of 
	$\mathfrak l$ and $\ell$ its characteristic. We consider modular forms as power series 
	with complex coefficients and for a newform $f$ we define by $\mathbb Q_f$ its field of
	coefficients, that is the number field $\mathbb Q_f=\mathbb Q(\{a_p\}_p)$. 
	We denote by $G_{\mathbb Q}$ the absolute Galois group $\Gal(\bar{\mathbb Q}\mid
	\mathbb Q)$ and by $G_p$ a decomposition group of $p$ contained in $G_{\mathbb Q}$.

\numberwithin{theorem}{section}


\section{Newforms}
	Let $\Gamma_0(N)$ be the subgroup of $\SL_2(\mathbb Z)$ corresponding to upper 
	triangular matrices mod $N$. The space $S_2(N):=S_2(\Gamma_0(N))$ of weight $2$
	 level $N$ trivial Nebentypus cusp forms is a finite dimensional vector space over 
	 $\mathbb C$. 
	 For every $M$ dividing $N$,  $S_2(M)$ contributes in $S_2(N)$ under the so-called 
	 degeneracy maps $S_2(M)\hooklongrightarrow S_2(N)$.
	Let $S_2(N)^{old}$ be the subspace of $S_2(N)$ spanned by the images of the degeneracy 
	maps for every $M\mid N$. Let $S_2(N)^{new}$ be the orthogonal space of $S_2(N)^{old}$ 
	with respect to the Peterson inner product. 
	A theorem of Atkin-Lehner says that $S_2(N)^{new}$ admits a basis of Hecke eigenforms 
	called newforms, this basis is unique. 

\subsection{Galois representation} 
	Let $f$ be a newform of level $N$ and let $\mathfrak l$ be a prime of residue characteristic
	 $\ell$. 
	A construction of Shimura (see \cite{DDT} section 1.7) attaches to $f$ an abelian variety 
	$A_f$ over $\mathbb Q$ of dimension $n=[\mathbb Q_f:\mathbb Q]$. 
	$A_f$ has good reduction at primes not dividing $N$. Let $\mathbb Q_{f,\mathfrak l}$ 
	denote the completion of $\mathbb Q_f$ with respect to $\mathfrak l$. 
	Working with the Tate module $\mathcal V_\ell(A_f) = \varprojlim_n A_f[\ell^n]
	\otimes _{\mathbb Z_\ell} \mathbb Q_\ell$ one can attach to $A_f$ a continuous Galois 
	representation 
$$
	\rho_{f,\mathfrak l}:G_\mathbb Q\longrightarrow \GL_2(\mathbb Q_{f,\mathfrak l})
$$
	such that $det \rho_{f,\mathfrak l}$ is the $\ell$-adic cyclotomic character and
	 $tr \rho_{f,\mathfrak l} (Frob_p)=a_p(f)$ for every $p\nmid N\ell$.  
	Indeed,   $\mathcal V_\ell(A_f)$ and $\rho_{f,\mathfrak l}$ are unramified at
	 $p\nmid N\ell$ by Neron-Ogg-Shafarevich criterion.
	 Then $\bar\rho_{f,\mathfrak l}$ is obtained as the semisimple reduction of 
	 $\rho_{f,\mathfrak l}$ mod $\mathfrak l$ tensor $\mathbb F_{\mathfrak l}$. 
	
	Next definition is central in this paper.
\begin{definition}
	Let $f$, $g$ be newforms of weight two, level $N$, $N'$ respectively and trivial character. 
	\begin{itemize}
	\item We say that $f$ and $g$ are 
		\emph{Galois-congruent} if there is some prime $\mathfrak l$ in $\bar{\mathbb Z}$ 
		such that
	$$
		\bar\rho_{f,\mathfrak l}\simeq 	\bar\rho_{g,\mathfrak l}.
	$$
		\item We say that \emph{$g$ is a level-raising of $f$ at $p$} if
		$f$ and $g$ are Galois-congruent and $p\parallel N'$ but
		$p\nmid N$.
		\item We say that \emph{$g$ is a strong level-raising of $f$ at $p$ over $\mathfrak l$}
		if $g$ is a level raising of $f$ at $p$ with $N'= Np$.
	\end{itemize}
\end{definition}

 \begin{remark}
	From Brauer-Nesbitt theorem (\cite{CR} theorem 30.16) we have that a
	semisimple Galois representation $\bar\rho:\Gal(\bar{\mathbb Q}\mid \mathbb Q)
	\rightarrow GL_2(\bar{\mathbb F}_\ell)$ is uniquely determined by the characteristic 
	polynomial function. 
	Hence $\bar{\rho}$ is determined by $tr$ and $det$.
	Since all Galois representations we consider have cyclotomic determinant 
	Galois-congruence is equivalent to congruence on traces of unramified Frobenius 
	(cf. Chebotarev density theorem, \cite{Serre98} Corollary 2). 
\end{remark}
\begin{remark}
	Let $R$ be a ring at which $2$ is invertible and $A$ a free $R$-module of rank $2$. 
	For an endomorphism $f$ of $A$ we have that $tr(f^2) = tr(f)^2 - 2 det(A)$ and hence 
	$det$ is determined by $tr$. This is \cite{DDT} Proposition 2.6 b) for $d=2$
	and gives necessary conditions on existence of Galois-Congruency for general
	weights and levels (cf. \cite{Serre87}).

\end{remark}

\begin{remark}
	With our definition Le Hung and Li \cite{LeHungLi} do strong level raising at a set of primes
	with some extra requirements always in characteristic $2$.
\end{remark}

	Let $\omega_\ell$ denote the mod $\ell$ cyclotomic character and for $\alpha\in
	\bar{\mathbb Z}$ let $\lambda_\alpha$ be the unique unramified character 
	$\lambda_\alpha:G_p \rightarrow \mathbb F_{\mathfrak l}^\times$ sending the arithmetic 
	Frobenius $Frob_p$ to $\alpha$ mod $\mathfrak l$. 
	We collect in the following theorem work of Deligne, Serre, Fontaine, Edixhoven, Carayol
	and Langlands. 
	It gives some necessary conditions for level-raising existence. 

\begin{theorem}\label{local}
	Let $g$ be a newform in $S_2(Mp)$ with $p\nmid 2M$ and fix a prime $\mathfrak l$.
	Then
$$
	\bar\rho_{g,\mathfrak l}\vert_{G_p}\simeq
	\left(
		\begin{array}{cc}
			\omega_\ell	&*\\
						&1
		\end{array}
	\right)\otimes \lambda_{a_p(g)}.
$$
	Let $f$ be a newform $S_2(N)$, $p\nmid 2N$ and fix a prime $\mathfrak l$ containing 
	$p$. Then either $\bar\rho_{f,\mathfrak l}\vert_{G_p}$ is irreducible or 
$$
	\bar\rho_{f,\mathfrak l}\vert_{G_p}\simeq
	\left(
		\begin{array}{cc}
			\omega_p\lambda_{a_p(f)^{-1}}		&*\\
										&\lambda_{a_p(f)}
		\end{array}
	\right)
$$
with $*$ `peu ramifié'.
\end{theorem}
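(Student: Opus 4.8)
The plan is to prove the two assertions separately, in each case by identifying the local automorphic component at $p$, transporting its description to the Galois side via local--global compatibility, and then reducing modulo $\mathfrak l$.

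For the first assertion I would start from the observation that a newform $g$ in $S_2(Mp)$ with $p\nmid M$ has conductor exactly $p$ at $p$; since $g$ has trivial Nebentypus, this forces the local component $\pi_p$ to be an unramified twist of the Steinberg representation of $\GL_2(\mathbb Q_p)$, the twisting character being $\lambda_{a_p(g)}$ (so in particular $a_p(g)\in\{\pm1\}$). When $\ell\neq p$, the local--global compatibility of Langlands, Deligne and Carayol identifies $\rho_{g,\mathfrak l}|_{G_p}$, up to the twist by $\lambda_{a_p(g)}$, with a non-split extension of the trivial character by the $\ell$-adic cyclotomic character; this is upper triangular of the stated form, and I would finish by checking that reduction modulo $\mathfrak l$ together with the global semisimplification leaves the constituents and their order on $G_p$ unchanged. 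When $\ell=p$ I would instead use that $\rho_{g,\mathfrak l}|_{G_p}$ is semistable but not crystalline with Hodge--Tate weights $\{0,1\}$, hence ordinary, and invoke the work of Fontaine and of Edixhoven to put it in the displayed form before reduction.

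For the second assertion, the hypothesis $p\nmid 2N$ makes $\rho_{f,\mathfrak l}|_{G_p}$ crystalline with Hodge--Tate weights $\{0,1\}$, the crystalline Frobenius having characteristic polynomial $X^2-a_p(f)X+p$. If $\bar\rho_{f,\mathfrak l}|_{G_p}$ is irreducible we are in the first alternative. Otherwise it is reducible, which for weight $2$ and $p>2$ forces $f$ to be ordinary at $\mathfrak l$; then $\rho_{f,\mathfrak l}|_{G_p}$ is ordinary in the familiar way, with unramified quotient $\lambda_u$ where $u$ is the unit root of $X^2-a_p(f)X+p$, and since the other root reduces to $0$ one has $u\equiv a_p(f)\pmod{\mathfrak l}$, so after reduction the quotient is $\lambda_{a_p(f)}$ and, using $\det\bar\rho_{f,\mathfrak l}=\omega_p$, the sub-line is $\omega_p\lambda_{a_p(f)^{-1}}$. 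The remaining point, that the extension class $*$ is \emph{peu ramifi\'e}, I would deduce from Fontaine--Laffaille theory applied to crystalline representations of this Hodge--Tate type, which is exactly why one needs $p>2$, just as in Edixhoven's analysis of the weight in Serre's conjecture.

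The manipulations with unramified twists and with the normalisation of the local Langlands correspondence are routine; the parts I expect to be delicate, and would simply cite rather than reprove, are the $\ell=p$ statements --- ordinarity of the semistable case in the first assertion and, above all, the peu ramifi\'e property in the ordinary crystalline case of the second --- since these rest on integral $p$-adic Hodge theory and not merely on local--global compatibility.
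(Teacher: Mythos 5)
Your proposal is correct and follows essentially the same route as the paper, which simply cites Carayol for the $\ell\neq p$ case, Breuil (via ordinarity of the Steinberg twist, $a_p(g)=\pm1$) for the $\ell=p$ case, and Breuil--M\'ezard for the crystalline statement about $f$; you have merely unpacked the standard content of those references (unramified twist of Steinberg at $p$, local--global compatibility, ordinarity and the \emph{peu ramifi\'e} property of weight-$2$ crystalline ordinary representations). No gaps.
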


\begin{proof}
	Case $p\in \mathfrak l$ is Theorem 6.7 in \cite{Breuil} for $k=2$. Because $a_p(g)
	\in \{\pm1\}$,
	we have that $g$ is ordinary at $p$.
	Case $p \notin \mathfrak l$ follows from Carayol's theorem in \cite{Carayol}.
	
	The statement for $f$ is Corollary 4.3.2.1 in \cite{BreuilMezard}.
\end{proof}

Following corollaries were inspired by Proposition 6 in \cite{BCDF}.

\begin{corollary}\label{reciprocal}
	Let $f\in S_2(N)$, $g\in S_2(Mp)$ be Galois-congruent newforms over $\mathfrak l$,
	$p\nmid 2NM$. Then
$$
	a_p(f) \equiv a_p(g) (p+1) \pmod {\mathfrak l}.
$$	
\end{corollary}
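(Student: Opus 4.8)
The plan is to restrict the isomorphic global representations $\bar\rho_{f,\mathfrak l}\simeq\bar\rho_{g,\mathfrak l}$ to a decomposition group $G_p$, read off $a_p(f)$ and $a_p(g)$ from the traces of $Frob_p$, and split according to whether $p\notin\mathfrak l$ or $p\in\mathfrak l$, applying the appropriate half of Theorem~\ref{local}. First suppose $p\notin\mathfrak l$. Since $p\nmid N\ell$, the representation $\bar\rho_{f,\mathfrak l}$ is unramified at $p$ with $tr\,\bar\rho_{f,\mathfrak l}(Frob_p)\equiv a_p(f)\pmod{\mathfrak l}$, by the construction of $\rho_{f,\mathfrak l}$ recalled above (semisimplification does not change the trace). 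By the first assertion of Theorem~\ref{local}, $\bar\rho_{g,\mathfrak l}\vert_{G_p}$ is conjugate to an upper triangular representation with diagonal characters $\omega_\ell\lambda_{a_p(g)}$ and $\lambda_{a_p(g)}$; both are unramified at $p$ because $\ell\neq p$ and $\lambda_{a_p(g)}$ is unramified by definition. Hence the trace function of $\bar\rho_{g,\mathfrak l}\vert_{G_p}$ equals $(\omega_\ell+1)\lambda_{a_p(g)}$ and factors through the unramified quotient; equating it with the trace function of $\bar\rho_{f,\mathfrak l}\vert_{G_p}$ and evaluating at $Frob_p$, using $\omega_\ell(Frob_p)\equiv p\pmod{\mathfrak l}$, gives $a_p(f)\equiv(p+1)a_p(g)\pmod{\mathfrak l}$.

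Now suppose $p\in\mathfrak l$, so $\ell=p$. The local representation $\bar\rho_{g,\mathfrak l}\vert_{G_p}$ furnished by Theorem~\ref{local} is reducible (it has an invariant line), hence so is the isomorphic $\bar\rho_{f,\mathfrak l}\vert_{G_p}$; therefore $f$ falls in the second alternative of Theorem~\ref{local}, and $\bar\rho_{f,\mathfrak l}\vert_{G_p}$ has, on semisimplification, diagonal characters $\omega_p\lambda_{a_p(f)^{-1}}$ and $\lambda_{a_p(f)}$. Since $\bar\rho_{f,\mathfrak l}\vert_{G_p}\simeq\bar\rho_{g,\mathfrak l}\vert_{G_p}$, these must coincide as a multiset with the diagonal characters $\omega_p\lambda_{a_p(g)}$ and $\lambda_{a_p(g)}$ of $\bar\rho_{g,\mathfrak l}\vert_{G_p}$. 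Now $p\nmid 2M$ forces $p$ odd, so $\omega_p$ is ramified at $p$ while the $\lambda$'s are not; matching the ramified character with the ramified one and the unramified with the unramified yields $\lambda_{a_p(f)}=\lambda_{a_p(g)}$, that is $a_p(f)\equiv a_p(g)\pmod{\mathfrak l}$. Since $p\in\mathfrak l$ we have $p+1\equiv 1$, so $(p+1)a_p(g)\equiv a_p(g)\equiv a_p(f)\pmod{\mathfrak l}$, as claimed.

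The argument is more bookkeeping than substance; the main point to be careful about is that the trace of $Frob_p$ on the $g$-side is well defined even though $\bar\rho_{g,\mathfrak l}\vert_{G_p}$ need not split, because the trace depends only on the diagonal characters of Theorem~\ref{local}, which are unramified when $p\neq\ell$. In the residue-characteristic case $p=\ell$ the delicate step is to match the two semisimplifications correctly, and this is unambiguous precisely because $p\nmid 2NM$ makes $p$ odd and hence $\omega_p$ ramified at $p$, so exactly one constituent on each side is ramified.
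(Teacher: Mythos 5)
Your proof is correct and follows essentially the same route as the paper's: split on whether $p\in\mathfrak l$, use Theorem~\ref{local} to compute the trace of $Frob_p$ on the $g$-side (giving $a_p(g)(p+1)$) when $p\neq\ell$, and when $p=\ell$ match the semisimplified local characters using that $\omega_p$ is ramified for $p$ odd to conclude $a_p(f)\equiv a_p(g)$, hence $\equiv a_p(g)(p+1)$ since $p+1\equiv 1\pmod{\mathfrak l}$. You are somewhat more explicit than the paper about why the trace on the $g$-side is well defined and about the final congruence $p+1\equiv 1$, but the substance is identical.
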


\begin{proof}
	\emph{Case $p\notin \mathfrak l$}. We have that $tr\bar\rho_{g,\mathfrak l}(Frob_p)
	=a_p(g) ( p+1)$.
	On the other hand $\bar\rho_{f,\mathfrak l}$ is unramified at $p$ and has trace
	$a_p(f)$ at $Frob_p$.\\
	\emph{Case $p\in \mathfrak l$}. The isomorphism 
	$\bar{\rho}_{f,\mathfrak l}\vert_{G_p}\simeq \bar{\rho}_{g,\mathfrak l}\vert_{G_p}$
	implies that $\bar{\rho}_{f,\mathfrak l}\vert_{G_p}$ reduces and we have equality 
	of characters
$$
	\{\omega_p \lambda_{a_p(g)}, \lambda_{a_p(g)}\} =
	 	\{\omega_p \lambda_{a_p(f)^{-1}}, \lambda_{a_p(f)}\} 
$$
	The mod $p$ cyclotomic character is ramified since $p\neq 2$. In particular 
$
	a_p(g) \equiv a_p(f)\pmod{ \mathfrak l}.
$
\end{proof}

	As a consequence we obtain a result on congruent modular forms with level-raising.
\begin{corollary}\label{congGcong}
	Let $f\in S_2(N)$, $g\in S_2(Mp)$ be newforms, $p\nmid 2NM$. If $f$ and $g$ are 
	\emph{congruent}, that is 
$$
	a_n(f)\equiv a_n(g)\pmod{ \mathfrak l}\qquad \text{ for every $n$,}
$$ 
	then 
$$
	\qquad \ell=p \qquad \text{ and } \qquad a_p(f)\equiv a_p(g) = \pm 1 \pmod{\mathfrak l}. 
$$
\end{corollary}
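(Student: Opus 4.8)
The plan is to deduce everything from Corollary~\ref{reciprocal} together with the fact that $a_p(g) = \pm 1$. First I would observe that if $f$ and $g$ are congruent then in particular $a_q(f) \equiv a_q(g) \pmod{\mathfrak l}$ for every prime $q$, hence $tr\,\bar\rho_{f,\mathfrak l}(Frob_q) \equiv tr\,\bar\rho_{g,\mathfrak l}(Frob_q)$ for all $q \nmid NMp\ell$. Since both representations are semisimple and have the same (cyclotomic) determinant, Brauer--Nesbitt and Chebotarev, exactly as in the remark following the definition of Galois-congruence, give $\bar\rho_{f,\mathfrak l} \simeq \bar\rho_{g,\mathfrak l}$. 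Thus $f$ and $g$ are Galois-congruent over $\mathfrak l$, and since $p \nmid 2NM$, Corollary~\ref{reciprocal} applies and yields
\[
 a_p(f) \equiv a_p(g)\,(p+1) \pmod{\mathfrak l}.
\]

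Next I would feed back the congruence at the single index $n = p$, namely $a_p(f) \equiv a_p(g) \pmod{\mathfrak l}$. Subtracting it from the displayed relation gives $a_p(g)\cdot p \equiv 0 \pmod{\mathfrak l}$. Here I use that $g \in S_2(Mp)$ is a newform with $p$ exactly dividing its level, weight $2$ and trivial character, so by Atkin--Lehner theory $a_p(g) = \pm 1$; this is already recorded in the proof of Theorem~\ref{local}, and in particular $a_p(g)$ is a unit modulo every prime of $\bar{\mathbb Z}$. Hence $p \in \mathfrak l$, i.e. $\ell = p$. Finally, combining $\ell = p$ with $a_p(f) \equiv a_p(g) \pmod{\mathfrak l}$ and $a_p(g) = \pm 1$ gives $a_p(f) \equiv a_p(g) = \pm 1 \pmod{\mathfrak l}$, which is precisely the assertion.

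Each of these steps is essentially a one-line manipulation, so I do not expect a genuine obstacle. The only point that deserves a little care is the very first one: making sure the finitely many ``bad'' primes dividing $NMp\ell$ cause no trouble when upgrading a congruence of Fourier coefficients to an isomorphism of residual representations. This is handled by the standard Brauer--Nesbitt plus Chebotarev argument, the same mechanism already invoked in the remarks after the central definition, so there is nothing new to establish there.
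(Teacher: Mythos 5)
Your proof is correct and follows essentially the same route as the paper: congruence of coefficients upgrades to Galois-congruence (Brauer--Nesbitt plus Chebotarev), Corollary~\ref{reciprocal} gives $a_p(f)\equiv a_p(g)(p+1)\pmod{\mathfrak l}$, and combining this with $a_p(f)\equiv a_p(g)$ and $a_p(g)=\pm1$ forces $\ell=p$. The paper's own proof is just a terser version of exactly this argument.
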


\begin{proof}
	Congruency implis Galois-congruency. We have that 
	$$
	a_p(g)\equiv a_p(f) \equiv a_p(g) (p+1)\pmod {\mathfrak l}.
	$$ 
	The corollary follows since $a_p(g)\in \{\pm 1\}$.
\end{proof}
	\subsection{Ribet's level raising}\label{sectionRibet}
	Ribet's theorem says that the necessary condition of Corollary \ref{reciprocal} for 
	level-raising turns out to be enough modulo some irreducibility condition.

\begin{theorem}[Ribet's level raising theorem]\label{Ribet}
	Let $f$ be a newform in $S_2(N)$ such that the mod $\mathfrak l$ Galois representation
$$
	\bar\rho_{f,\mathfrak l}: \Gal(\overline{\mathbb Q}\mid \mathbb Q)\longrightarrow 
	\GL_2(\mathbb F_\mathfrak l)
$$
	is absolutely irreducible. Let $p\nmid N$ be a prime satisfying
$$
	a_p(f) \equiv \varepsilon(p+1) \pmod {\mathfrak l}
$$
	for some $\varepsilon\in\{\pm1\}$.
	Then there exists a newform $g$ in $S_2(pM)$, for some divisor $M$ of $N$ such that 
	$\bar\rho_{f,\mathfrak l}$ is isomorphic to $\bar\rho_{g,\mathfrak l}$. 
	If $2\notin \mathfrak l$  $g$ can be chosen with $a_p(g)=\varepsilon$.
\end{theorem}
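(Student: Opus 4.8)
The plan is to reduce everything to a statement about the Hecke algebra $\mathbb{T}_{Np}$ acting faithfully on $S_2(\Gamma_0(Np))$: it suffices to produce a maximal ideal $\widetilde{\mathfrak m}\subset\mathbb{T}_{Np}$ of residue characteristic $\ell$, with $T_q\equiv a_q(f)$ for $q\nmid Np\ell$ and $U_p\equiv\varepsilon$ modulo $\widetilde{\mathfrak m}$, that lies in the support of the $p$-new subspace. Indeed, any newform $g$ contributing to $(\mathbb{T}_{Np})_{\widetilde{\mathfrak m}}$ is then new at $p$, hence has level $Mp$ for a divisor $M$ of $N$ (the level at the primes dividing $N$ can only drop), and $\bar\rho_{g,\mathfrak l}\simeq\bar\rho_{f,\mathfrak l}$ by Brauer--Nesbitt. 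First I would record that $\bar\rho_{f,\mathfrak l}$ already occurs at level $Np$: the mod $\mathfrak l$ eigensystem of $f$ defines a maximal ideal $\mathfrak m\subset\mathbb{T}_N$, non-Eisenstein since $\bar\rho_{f,\mathfrak l}$ is absolutely irreducible, and on the $p$-old subspace $S_2(\Gamma_0(N))^{\oplus2}\subset S_2(\Gamma_0(Np))$ the operator $U_p$ satisfies $U_p^2-T_pU_p+p=0$; since $a_p(f)\equiv\varepsilon(p+1)=\varepsilon+\varepsilon p\pmod{\mathfrak l}$, the polynomial $X^2-a_p(f)X+p$ has roots $\equiv\varepsilon$ and $\equiv\varepsilon p$ modulo $\mathfrak l$, so $\widetilde{\mathfrak m}:=(\mathfrak m,\,U_p-\varepsilon)$ is a genuine maximal ideal of $\mathbb{T}_{Np}$, in the support of $S_2(\Gamma_0(Np))$, with $\bar\rho_{\widetilde{\mathfrak m}}\simeq\bar\rho_{f,\mathfrak l}$. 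The whole problem is then to move $\widetilde{\mathfrak m}$ from the $p$-old part to the $p$-new part.

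For that I would pass to the model of $J_0(Np)$ over $\mathbb{Z}_p$: by Deligne--Rapoport the special fibre of $X_0(Np)$ is two copies of $X_0(N)_{\mathbb{F}_p}$ meeting transversally at the set $\Sigma$ of supersingular points, and since $J_0(N)$ has good reduction at $p$ the N\'eron model of $J_0(Np)$ at $p$ has toric part $T$ with abelian quotient $J_0(N)^2_{\mathbb{F}_p}$ and character group $X$ equal to the homology of the associated dual graph, sitting in a Hecke-equivariant exact sequence $0\to X\to\mathbb{Z}[\Sigma]\to\mathbb{Z}\to0$ in which $\mathbb{Z}$ is Eisenstein. Grothendieck's monodromy theory then reads off $\rho_{\widetilde{\mathfrak m}}|_{G_p}$ from the monodromy filtration on $\mathrm{Ta}_\ell J_0(Np)_{\widetilde{\mathfrak m}}$: it is unramified if $X_{\widetilde{\mathfrak m}}=0$ and is special (Steinberg up to an unramified twist, as in Theorem~\ref{local}) as soon as $X_{\widetilde{\mathfrak m}}\neq0$, in which case any contributing eigenform is new at $p$. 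So the theorem comes down to the single assertion $X_{\widetilde{\mathfrak m}}\neq0$, equivalently $(\mathbb{Z}[\Sigma])_{\widetilde{\mathfrak m}}\neq0$: the residual system of $f$ must occur in the mod $\ell$ supersingular module for an Eichler order of level $N$ in the quaternion algebra ramified at $\{p,\infty\}$.

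This last point is the hard part, and it is precisely the content of Ribet's theorem. The tool is Ihara's lemma: for $\mathfrak m$ non-Eisenstein the degeneracy map $J_0(N)^2\to J_0(Np)$ has kernel supported only at Eisenstein primes, and tracing this through the reduction picture identifies the congruence module detecting whether $\widetilde{\mathfrak m}$ meets the $p$-new part with a nonzero quotient of the local ring $(\mathbb{T}_N)_{\mathfrak m}$ — nonzero exactly because $T_p-\varepsilon(p+1)\in\mathfrak m$, i.e.\ because of the hypothesis $a_p(f)\equiv\varepsilon(p+1)\pmod{\mathfrak l}$. (Equivalently, via Jacquet--Langlands $\mathbb{Z}[\Sigma]^0$ computes $p$-new forms of level dividing $Np$, and the reciprocity congruence of Corollary~\ref{reciprocal} with $a_p(g)=\varepsilon$ is exactly what forces a $p$-new class congruent to $f$ mod $\mathfrak l$.) Everything else — the monodromy filtration, the rank count on $\mathrm{Ta}_\ell J_0(Np)_{\widetilde{\mathfrak m}}$, the passage from eigenform to newform — is comparatively formal, and the non-Eisenstein hypothesis is genuinely indispensable here, since level raising does fail at Eisenstein primes.

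Finally, for the last clause: a newform $g$ produced as above is new at $p$ of level $Mp$, so $a_p(g)\in\{\pm1\}$ by Theorem~\ref{local}, and it contributes to $(\mathbb{T}_{Np})_{\widetilde{\mathfrak m}}$, whence $a_p(g)\equiv\varepsilon\pmod{\mathfrak l}$; when $2\notin\mathfrak l$ the two signs $\pm1$ are distinct modulo $\mathfrak l$, forcing $a_p(g)=\varepsilon$, whereas in residue characteristic $2$ they collapse — the obstruction taken up later in the paper.
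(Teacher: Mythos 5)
This theorem is not proved in the paper at all: it is quoted from Ribet's article \cite{Ribet} (the remarks following it only discuss the meaning of ``$p$-new'' and the case $p=\ell$), so there is no internal argument to compare yours against. What you have written is a faithful outline of Ribet's actual proof: the factorization $X^2-a_p(f)X+p\equiv(X-\varepsilon)(X-\varepsilon p)\pmod{\mathfrak l}$ producing the non-Eisenstein maximal ideal $\widetilde{\mathfrak m}=(\mathfrak m,U_p-\varepsilon)$ of $\mathbb T_{Np}$ supported on the $p$-old part; the Deligne--Rapoport/Grothendieck picture reducing ``$\widetilde{\mathfrak m}$ meets the $p$-new part'' to $X_{\widetilde{\mathfrak m}}=\mathbb Z[\Sigma]_{\widetilde{\mathfrak m}}\neq0$ (the identification holding because the quotient $\mathbb Z$ is Eisenstein); and the closing sign argument, which correctly isolates where $2\notin\mathfrak l$ enters. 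As you yourself concede, the decisive step --- that $\mathbb Z[\Sigma]_{\widetilde{\mathfrak m}}\neq0$ --- is only gestured at: in Ribet's paper this is the congruence-module computation for the degeneracy map from (the homology of) the $p$-old part into the supersingular module, whose ``degree matrix'' $\left(\begin{smallmatrix}p+1&T_p\\ T_p&p+1\end{smallmatrix}\right)$ has determinant $(p+1)^2-T_p^2\in\mathfrak m$ exactly under the hypothesis, with Ihara's lemma (kernel Eisenstein, hence the need for absolute irreducibility) guaranteeing that this determinant really computes the congruence module. So the proposal is a sound reconstruction of a cited result, but it is an outline, not a proof.

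Two smaller points. First, from $X_{\widetilde{\mathfrak m}}\neq0$ one concludes only that \emph{some} eigenform in the support of $\widetilde{\mathfrak m}$ is $p$-new (because $X\otimes\mathbb Q$ is a faithful module over the $p$-new quotient of the Hecke algebra), not that ``any contributing eigenform is new at $p$'' --- $f$ itself still contributes to $(\mathbb T_{Np})_{\widetilde{\mathfrak m}}$ through the old part. Second, your reduction via the monodromy filtration on $\mathrm{Ta}_\ell J_0(Np)$ implicitly assumes $\ell\neq p$; as the paper's own remark after the theorem notes, the case $p=\ell$ must be handled via the Hecke-module (or Jacquet--Langlands) formulation you mention parenthetically, since one can no longer argue with traces of Frobenius at $p$ or with the $\ell$-adic monodromy filtration at $p$ in the naive way.
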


\begin{remarks}
	\begin{itemize}
		\item Ribet's original approach deals with modular Galois representations $\bar{\rho}$
		 so that in particular there is some newform $f$ such that $\bar{\rho}\simeq
		 \bar{\rho}_{f,\mathfrak l}$. 
		His approach deals with traces of Frobenii, this forces him to deal with unramified 
		primes only, hence the hypothesis $p\neq \ell$. 
		As he explains later the theorem can be stated in terms of Hecke operators and 
		hence in terms of Fourier coefficients even if $p=\ell$, when $f$ is $p$-new.
	
		\item Every normalized Hecke eigenform $f'$ has attached a unique 
		newform $f$
		so that the $\mathfrak l$-adic Galois representations attached to $f'$ are the ones 
		attached to $f$.
		Furthermore, the level of $f$ divides the level of $f'$. 
		Hence $p$-new in Ribet's article means new of level $pM$ for some $M\mid N$.
	\end{itemize}
\end{remarks}

	We introduce a definition in order to deal with the irreducibility condition.
\begin{definition}
	Let $f\in S_2(N)$ be a newform, let $\mathfrak l$ be a prime of $\overline{ \mathbb Z}$ and
	let $\bar\rho_{f,\mathfrak l}:\Gal(\overline{\mathbb Q}\mid\mathbb Q)\rightarrow
	\GL_2(\overline{\mathbb F}_{\mathfrak l})$ denote the semisimple mod $\mathfrak l$ Galois
	representation attached to $f$ by Shimura. 
	We say that $f$ satisfies condition $\boldsymbol{(AbsIrr)}$ if $\bar\rho_{f,
	\mathfrak l}$ is absolutely irreducible for every prime $\mathfrak l$. 
\end{definition}

	In section \ref{examples} we provide explicit examples with $\mathbb Q_f =  \mathbb Q$.
	See \cite{DieulefaitWiese} Theorem 6.2 for a construction of families $\{f_n\}$ for which
	the set of degrees $\{\dim_\mathbb Q \mathbb Q_{f_n}\}_n$ is unbounded.


\section{Bounds and arithmetics of Fourier coefficients}

%
	
	In light of Ribet's theorem and the following well-known properties of Fourier coefficients 
	we study some  arithmetic properties of $p+1\pm a_p$. 
	
\begin{theorem}\label{Hasse}
	Let $f$ be a newform in $S_2(N)$ and let $a_p$ be the $p$-th Fourier coefficient of $f$, 
	$p$ prime. Then
\begin{enumerate}[(i)]
		\item $a_p\in \overline{\mathbb Z}$,
		\item $a_p$ is totally real. That is, its minimal polynomial splits over $\mathbb R$,	
		\item(Hasse-Weil Theorem) $|\sigma(a_p)|\leq 2\sqrt p$ for every embedding 
		$\sigma:\mathbb Q(a_p)\hooklongrightarrow \mathbb R$.
	\end{enumerate}
\end{theorem}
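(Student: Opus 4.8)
These are the classical facts underlying Ribet's criterion; the plan is to extract them from the Eichler--Shimura theory already attached to $A_f$. For part (i) I would realize $a_p$ as an eigenvalue of a $\mathbb Z$-linear endomorphism of a finitely generated free $\mathbb Z$-module: the Hecke operator $T_p$ preserves the integral homology $H_1(X_0(N),\mathbb Z)$ (equivalently, a lattice of cusp forms with integral $q$-expansion), so its characteristic polynomial is monic with integer coefficients; since $f$ is a $T_p$-eigenform with eigenvalue $a_p$, this forces $a_p\in\overline{\mathbb Z}$.

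For part (ii), note that $\Gal(\overline{\mathbb Q}\mid\mathbb Q)$ acts on the finite set of newforms in $S_2(N)$ through its action on Fourier coefficients, so every conjugate $\sigma(a_p)$ is again the $T_p$-eigenvalue of a newform $f^\sigma\in S_2(N)$. When $p\nmid N$ the operator $T_p$ is self-adjoint for the Petersson inner product on $S_2(N)$ --- this is where triviality of the Nebentypus is used --- hence has real eigenvalues, so each $\sigma(a_p)$ is real; when $p\mid N$ one has $a_p\in\{0,\pm1\}$ (by the local description recalled below), which is manifestly totally real. Either way $a_p$ is totally real.

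For part (iii) the one genuinely deep ingredient is the Hasse--Weil bound, i.e.\ the Riemann Hypothesis for abelian varieties over finite fields. If $p\nmid N$, then $A_f$ has good reduction at $p$ and, as recorded in the excerpt, $\rho_{f,\mathfrak l}$ is unramified at $p$ with cyclotomic determinant and $\mathrm{tr}\,\rho_{f,\mathfrak l}(\mathrm{Frob}_p)=a_p$; extending scalars to $\overline{\mathbb Q}$, the characteristic polynomial of $\mathrm{Frob}_p$ acting on $\mathcal V_\ell(A_f)$ factors as $\prod_\sigma\bigl(X^2-\sigma(a_p)X+p\bigr)$ over the embeddings $\sigma\colon\mathbb Q_f\hookrightarrow\mathbb C$. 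Weil's theorem says all eigenvalues of $\mathrm{Frob}_p$ have complex absolute value $\sqrt p$, and since $\sigma(a_p)$ is real the two roots of each quadratic factor must be complex conjugates of modulus $\sqrt p$; hence $|\sigma(a_p)|=|2\sqrt p\cos\theta|\le 2\sqrt p$. If $p\mid N$ instead, the local behaviour of $\rho_{f,\mathfrak l}$ at $p$ (Atkin--Lehner) gives $a_p\in\{0,\pm1\}$, so the bound is trivial. Thus the only obstacle worth flagging is this appeal to the Weil bound through Eichler--Shimura; parts (i) and (ii) are formal once the integral and Hermitian structures on the Hecke algebra are in hand, and the statement is assembled rather than proved from scratch.
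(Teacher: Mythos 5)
Your sketch is correct, and there is nothing to compare it against: the paper states Theorem \ref{Hasse} as a collection of well-known facts and gives no proof, so your assembly of the standard ingredients (integrality of $T_p$ on an integral lattice, self-adjointness of $T_p$ for $p\nmid N$ under Petersson together with Galois conjugation of newforms, the Atkin--Lehner values $a_p\in\{0,\pm1\}$ for $p\mid N$, and the Weil bound via Eichler--Shimura for $p\nmid N$) is exactly the intended justification. One small imprecision: in part (iii) the two roots of $X^2-\sigma(a_p)X+p$ need not be complex conjugates --- they can both be real and equal to $\pm\sqrt p$ at the boundary --- but since they have product $p$ and each has modulus $\sqrt p$, the triangle inequality $|\sigma(a_p)|\le|\alpha|+|\beta|=2\sqrt p$ closes the argument in all cases.
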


	We say that $a_p\in \overline{\mathbb Q}$ is a \emph{$p$-th 
	Fourier coefficient} if $a_p$ satisfies conditions $(i)-(iii)$. 
		
\subsection{Arithmetic lemmas}
	Let $K$ be a number field with ring of integers $\mathcal O$. 
	Let $S$ be the set of complex embeddings $\sigma:K\rightarrow \mathbb C$ of $K$.
	For every $\alpha \in \mathcal O$ we consider the norm 
$$
	N_K(\alpha) = \prod_{\sigma\in S} \sigma(\alpha)
$$	
	and the characteristic polynomial
$$
	P_{\alpha}(X) = \prod_{\sigma\in S} X-\sigma(\alpha).
$$
	One has that $P_\alpha (0) = (-1) ^{|S|}N_K(\alpha)$. Following well-known lemma
	is basic algebraic number theory.
		
\begin{lemma}
	Let $K$ be a number field and $\alpha$ an algebraic integer of $K$.
	Then $P_\alpha(X)\in \mathbb Z[X]$ and $N_K(\alpha)\in
	\mathbb Z$. A rational prime $\ell$ 
	divides $N_K(\alpha)$ if and only if there is some prime $\mathfrak l$ in
	$\bar{\mathbb Z}$ of residue characteristic $\ell$ such that $\alpha\equiv 0
	\pmod{\mathfrak l}$.
	In particular $\alpha$ is a unit of $\mathcal O$ if and only if $N_K(\alpha)=\pm1$.
\end{lemma}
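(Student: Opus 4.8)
The plan is to dispatch the four assertions in turn; all of them are standard facts about Dedekind domains and integral ring extensions. For the first, note that $P_\alpha(X)=m_\alpha(X)^{[K:\mathbb Q(\alpha)]}$, where $m_\alpha\in\mathbb Q[X]$ is the minimal polynomial of $\alpha$ over $\mathbb Q$; since $\alpha$ is an algebraic integer, $m_\alpha\in\mathbb Z[X]$ by Gauss's lemma, hence $P_\alpha\in\mathbb Z[X]$, and in particular $N_K(\alpha)=(-1)^{|S|}P_\alpha(0)\in\mathbb Z$.

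For the norm criterion, first dispose of the trivial case $\alpha=0$: then $N_K(\alpha)=0$ is divisible by every rational prime, and $\alpha\equiv 0\pmod{\mathfrak l}$ for every prime $\mathfrak l$, so both sides hold. Assume now $\alpha\neq 0$. In the Dedekind domain $\mathcal O$ factor the nonzero ideal $(\alpha)=\prod_i\mathfrak p_i^{e_i}$; multiplicativity of the absolute ideal norm gives $|N_K(\alpha)|=[\mathcal O:(\alpha)]=\prod_i|\mathcal O/\mathfrak p_i|^{e_i}$. Since $|\mathcal O/\mathfrak p_i|$ is a power of the rational prime $\ell_i$ below $\mathfrak p_i$, we see that $\ell\mid N_K(\alpha)$ if and only if $\ell$ lies below some $\mathfrak p_i$, equivalently there is a prime $\mathfrak p$ of $\mathcal O$ with $\alpha\in\mathfrak p$ and $\mathfrak p\cap\mathbb Z=\ell\mathbb Z$. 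Given such a $\mathfrak p$, the integral extension $\mathcal O\subset\bar{\mathbb Z}$ enjoys the lying-over property (Cohen--Seidenberg), so there is a prime $\mathfrak l$ of $\bar{\mathbb Z}$ with $\mathfrak l\cap\mathcal O=\mathfrak p$; then $\alpha\in\mathfrak p\subset\mathfrak l$ and $\mathfrak l\cap\mathbb Z=\ell\mathbb Z$, so $\mathfrak l$ has residue characteristic $\ell$. Conversely, starting from a prime $\mathfrak l$ of $\bar{\mathbb Z}$ of characteristic $\ell$ with $\alpha\in\mathfrak l$, set $\mathfrak p=\mathfrak l\cap\mathcal O$; this is a prime of $\mathcal O$ containing $\alpha$ and containing $\ell$, hence lying over $\ell$, so $\mathfrak p\mid(\alpha)$ and $\ell\mid|\mathcal O/\mathfrak p|$ which divides $|N_K(\alpha)|$.

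For the last assertion: if $\alpha\in\mathcal O^\times$ choose $\beta\in\mathcal O$ with $\alpha\beta=1$, so $N_K(\alpha)N_K(\beta)=1$ in $\mathbb Z$ and $N_K(\alpha)=\pm 1$; conversely if $N_K(\alpha)=\pm1$ then either $[\mathcal O:(\alpha)]=1$ forces $(\alpha)=\mathcal O$, or more explicitly, writing $P_\alpha(X)=X^n+c_{n-1}X^{n-1}+\dots+c_1X+c_0$ with $c_0=\pm1$, the identity $P_\alpha(\alpha)=0$ exhibits $\mp(\alpha^{n-1}+c_{n-1}\alpha^{n-2}+\dots+c_1)\in\mathcal O$ as a multiplicative inverse of $\alpha$. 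The only step that is not pure bookkeeping is the transition from a prime of the Dedekind domain $\mathcal O$ to a prime of the non-Noetherian ring $\bar{\mathbb Z}$; here one must invoke the fact that lying-over holds for \emph{arbitrary} integral extensions, so that the failure of $\bar{\mathbb Z}$ to be Noetherian causes no trouble.
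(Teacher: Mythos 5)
Your proof is correct and follows essentially the same route as the paper: factor $(\alpha)$ in the Dedekind domain $\mathcal O$, use $|N_K(\alpha)|=[\mathcal O:(\alpha)]$ and multiplicativity, and pass between primes of $\mathcal O$ and primes of $\bar{\mathbb Z}$ via lying-over (the paper phrases this as surjectivity of $\mathfrak l\mapsto\mathfrak l\cap\mathcal O$). The only cosmetic difference is that you obtain $P_\alpha\in\mathbb Z[X]$ from $P_\alpha=m_\alpha^{[K:\mathbb Q(\alpha)]}$ and Gauss's lemma, whereas the paper identifies $P_\alpha$ with the characteristic polynomial of the multiplication-by-$\alpha$ matrix in an integral basis; both are standard, and your explicit treatment of $\alpha=0$ is a welcome extra care.
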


\begin{proof}
	Let $n= |S| = \dim_{\mathbb Q} K$. Consider the embedding $\iota: K
	\hooklongrightarrow End_{\mathbb Q}(K)$, where $\iota(\alpha)$ is the 
	multiplication-by-$\alpha$ morphism. 
	A choice of integral basis of $K$
	induces an embedding $\iota: K\hooklongrightarrow M_{n\times n}(\mathbb Q)$ 
	with $\iota(\mathcal O) \subset M_{n\times n}(\mathbb Z)$. Then $P_\alpha(X)$
	is the characteristic polynomial of $\iota(\alpha)$ by Cayley-Hamilton theorem. 
	For a nonzero $\alpha\in \mathcal O$ one has that $|N_K(\alpha)| = \left| \mathcal O
	/\alpha \mathcal O\right| =\prod_i |\mathcal O/\mathfrak p_i^{e_i}|$ where 
	$\alpha\mathcal O = \prod_i \mathfrak p_i ^{e_i}$ is the factorization in prime ideals 
	of the ideal generated by $\alpha$. Thus $\ell$ divides $N_K(\alpha)$ if and only if
	some prime $\mathfrak p$ of $\mathcal O$ containing $\ell$ divides
	$\alpha\mathcal O$. Moreover, the map $MaxSpec(\bar{\mathbb Z})\rightarrow 
	MaxSpec(\mathcal O)$ given by $\mathfrak l\mapsto \mathfrak l \cap \mathcal O$ 
	is surjective and the lemma follows.
\end{proof}

\begin{lemma}\label{sign}
	Let $a_p$ be a $p$-th Fourier coefficient. 
	\begin{enumerate}[(a)]
		\item $(p+1+a_p)(p+1-a_p)$ is unit in $\bar{\mathbb Z}$ if and only if $p=2$ and 
		$a_2^2 = 8$.
		\item If $p>3$ then $p+1\pm a_p$, is not unit in $\bar{\mathbb Z}$.
	\end{enumerate}	
\end{lemma}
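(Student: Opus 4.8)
The plan is to express both quantities in terms of the norm $N_{\mathbb Q(a_p)}$ and apply the previous lemma, which reduces unit-ness of an algebraic integer to the value of its norm. For part (a), set $\beta = (p+1+a_p)(p+1-a_p) = (p+1)^2 - a_p^2$. This is an algebraic integer in $\mathbb Q(a_p)$, totally real by Theorem \ref{Hasse}(ii), and by the previous lemma it is a unit iff $N_{\mathbb Q(a_p)}(\beta) = \pm 1$, equivalently iff every conjugate $(p+1)^2 - \sigma(a_p)^2$ has absolute value controlled so that the product is $\pm 1$. Here I would invoke the Hasse--Weil bound $|\sigma(a_p)| \le 2\sqrt p$, which gives $(p+1)^2 - \sigma(a_p)^2 \ge (p+1)^2 - 4p = (p-1)^2 \ge 0$; so in fact every conjugate is a nonnegative real, hence the norm is $\ge 0$, and $\beta$ is a unit iff $N_{\mathbb Q(a_p)}(\beta) = 1$, which forces every conjugate factor to equal $1$ (a product of nonnegative reals equal to $1$ with each factor an algebraic conjugate — but they need not individually be $1$ unless I argue more carefully). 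The cleaner route: each factor satisfies $(p-1)^2 \le (p+1)^2 - \sigma(a_p)^2 \le (p+1)^2$, so for the product of these to be $\pm 1$ we need $(p-1)^2 \le 1$, i.e. $p - 1 \le 1$, i.e. $p = 2$ (the case $p=1$ being vacuous). When $p = 2$ the lower bound is $(p-1)^2 = 1$, and equality $(p+1)^2 - a_2^2 = 1$ throughout forces $9 - \sigma(a_2)^2 = 1$ for every $\sigma$, i.e. $\sigma(a_2)^2 = 8$ for every embedding, which says $a_2^2 = 8$ as an identity in $\mathbb Q(a_2)$. Conversely if $p = 2$ and $a_2^2 = 8$ then $\beta = 9 - 8 = 1$ is a unit. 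This proves (a).

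For part (b), suppose $p > 3$ and, towards a contradiction, that $\gamma := p + 1 + \varepsilon a_p$ is a unit in $\bar{\mathbb Z}$ for some fixed sign $\varepsilon \in \{\pm 1\}$ (the two cases are symmetric under $a_p \mapsto -a_p$, noting $-a_p$ is again a $p$-th Fourier coefficient). Then $N_{\mathbb Q(a_p)}(\gamma) = \pm 1$. Each conjugate is $p + 1 + \varepsilon \sigma(a_p)$ with $|\sigma(a_p)| \le 2\sqrt p$, hence $p + 1 + \varepsilon\sigma(a_p) \ge p + 1 - 2\sqrt p = (\sqrt p - 1)^2 > 0$ since $p \ge 4$ makes $\sqrt p > 1$; in particular every conjugate factor is a positive real, so $N_{\mathbb Q(a_p)}(\gamma) > 0$ and unit-ness forces $N_{\mathbb Q(a_p)}(\gamma) = 1$. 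But each factor is $\ge (\sqrt p - 1)^2 > 1$ once $p > 4$, i.e. $p \ge 5$ (using $\sqrt 5 - 1 > 1$), so the product is $> 1$, a contradiction. This settles $p \ge 5$, and since $p > 3$ prime means $p \ge 5$, part (b) follows.

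The main obstacle is the borderline arithmetic in part (a): the Hasse--Weil inequality is not strict, so when $p = 2$ one cannot immediately exclude the unit case, and one must correctly identify that the boundary configuration $\sigma(a_2)^2 = 8$ is exactly the exceptional locus — and check it is genuinely realizable as a totally real algebraic integer (it is: $a_2 = \pm 2\sqrt 2$), which is why the lemma is stated with an "if and only if" rather than an unconditional exclusion. A secondary point to handle carefully is that the norm is a product over all complex embeddings, and one must make sure the reduction "product of reals each $\ge c > 1$ is $> 1$" is applied only after establishing each factor is a positive real — which the Hasse--Weil bound guarantees once $p$ is large enough, but which genuinely fails to give positivity uniformly for small $p$, matching the need for the hypothesis $p > 3$ in (b) and the exceptional case in (a).
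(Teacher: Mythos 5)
Your proposal is correct and follows essentially the same route as the paper: bound each conjugate of $(p+1)^2-a_p^2$ below by $(p-1)^2$ (resp.\ each conjugate of $p+1\pm a_p$ by $(\sqrt p-1)^2$), pass to the norm $N_{\mathbb Q(a_p)}$, and invoke the preceding lemma characterizing units by their norm. Your extra care with the equality case in (a) — forcing every factor to equal $1$ and hence $\sigma(a_2)^2=8$ for all $\sigma$ — is a slightly more explicit version of the paper's one-line ``equalities hold if and only if $p=2$ and $9-a_2^2=1$,'' but it is the same argument.
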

\begin{proof}
	Let $K = \mathbb Q(a_p)$ and $S=\{\sigma\in :\mathbb Q(a_p)\hooklongrightarrow 
	\mathbb R\}$ be the set of embeddings. 
	Its cardinality equals $n$ the degree of $\mathbb Q(a_p)\mid \mathbb Q$.
	\begin{enumerate}[(a)]
		\item
		We have
		\begin{align*}
			\sigma((p+1+a_p)(p+1-a_p)) &= (p+1)^2 - \sigma(a_p)^2\\
					&\geq (p+1)^2 - 4 p\\
					&=(p-1)^2\\
					&\geq 1.
		\end{align*}
		Hence $N_K((p+1+a_p)(p+1-a_p))\geq (p-1)^{2n}\geq 1$. Equalities hold if 
		and only if $p=2$ and $9-a_2^2 = 1$.
		\item Since $\sigma(p+1 \pm a_p) \geq p+1- 2\sqrt p = (\sqrt p -1) ^2$ then
		$$
		N_K(p+1\pm a_p)\geq (\sqrt p -1) ^{2n}>1
		$$
		provided that $p>3$.
	\end{enumerate}
\end{proof}


\begin{lemma}[Avoiding $p$]\label{potential}
	Fix a positive odd integer $n$. There exists an integer $C_n$ such that if $a_p$
	 is a $p-th$ coefficient of degree $n$  with $p>C_n$ then there is a
	prime $\mathfrak l$ not over $p$ such that
$$
	\mathfrak l \mid (a_p + p+1)(a_p-p-1).
$$
\end{lemma}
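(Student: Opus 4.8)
The plan is to show that for $p$ large (depending only on $n$), the algebraic integer $\beta_p := (a_p+p+1)(a_p-p-1) = (p+1)^2 - a_p^2$ cannot be a power of $p$ times a unit, which by the first arithmetic Lemma forces a prime $\mathfrak l\nmid p$ dividing $\beta_p$. By Lemma \ref{sign}(b), for $p>3$ the integer $\beta_p = N_K((p+1+a_p)(p+1-a_p))$ up to sign — more precisely $N_K(\beta_p)$ — is already a non-unit, so \emph{some} rational prime $\ell$ divides it; the only thing that can go wrong is that the \emph{only} such $\ell$ is $p$ itself. So it suffices to rule out $\beta_p$ being (up to a unit of $\mathcal O_K$) a power of a prime of $\mathcal O_K$ above $p$.

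First I would bound $N_K(\beta_p)$ from above: since each real embedding satisfies $|\sigma(a_p)|\le 2\sqrt p$ (Theorem \ref{Hasse}), we get $\sigma(\beta_p) = (p+1)^2 - \sigma(a_p)^2 \le (p+1)^2$, so $N_K(\beta_p)\le (p+1)^{2n}$. If the only rational prime dividing $N_K(\beta_p)$ were $p$, then $N_K(\beta_p)$ is a power of $p$, hence $N_K(\beta_p) \le (p+1)^{2n} < p^{2n+1}$, so $\beta_p\mathcal O_K$ divides (the ideal generated by) $p^{2n}$ — in particular $v_{\mathfrak p}(\beta_p)\le 2n\cdot e(\mathfrak p\mid p)$ for every prime $\mathfrak p\mid p$. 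Next I would get a lower bound that contradicts this for large $p$: combining $\sigma(\beta_p)\ge (p-1)^2$ (from Lemma \ref{sign}(a)) with the hypothesis that $p\mid N_K(\beta_p)$, there is a prime $\mathfrak p\mid p$ with $v_{\mathfrak p}(\beta_p)\ge 1$; but $\beta_p \equiv (p+1)^2 - a_p^2 \equiv 1 - a_p^2 \pmod{p}$, so $p\mid N_K(\beta_p)$ forces $a_p^2 \equiv 1$ modulo some prime above $p$, i.e. $p\mid N_{K}(a_p^2-1) = N_K(a_p-1)N_K(a_p+1)$. Here is where the degree bound enters: $N_K(a_p\pm 1)$ is a nonzero (for $p$ large, since $a_p=\pm1$ only finitely often once $|a_p|\le 2\sqrt p$ is combined with integrality constraints... actually one must argue $a_p\ne \pm 1$ is automatic or handle it) integer of absolute value at most $(2\sqrt p+1)^n$, so the power of $p$ it can absorb is at most $\log_p\big((2\sqrt p+1)^{2n}\big) = n\frac{\log(2\sqrt p+1)}{\log p} \cdot 2 \to n$ as $p\to\infty$. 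Thus for $p$ large enough in terms of $n$, $v_p(N_K(\beta_p))$ is bounded, while comparing the two estimates $(p-1)^{2n}\le N_K(\beta_p)\le (\text{small power of }p)$ yields a contradiction; this defines $C_n$.

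The main obstacle I anticipate is bookkeeping the ramification and residue-degree contributions when passing between $N_K(\beta_p)$ and the $\mathfrak p$-adic valuations of $\beta_p$: a prime $\mathfrak p\mid p$ contributes $v_{\mathfrak p}(\beta_p)\cdot f(\mathfrak p\mid p)$ to $v_p(N_K(\beta_p))$, and $v_{\mathfrak p}(\beta_p)$ could in principle be as large as $2n\, e(\mathfrak p\mid p)$, so the naive bound on $v_p(N_K(\beta_p))$ is $2n^2$, not $O(n)$ — I would need the sharper input that $\beta_p\equiv 1-a_p^2\pmod p$ together with $|\sigma(a_p^2-1)|\le 4p$ to control each $v_{\mathfrak p}(a_p^2-1)$ directly, rather than bounding $v_{\mathfrak p}(\beta_p)$ crudely. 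The key point making the argument work is that $\beta_p/p^{2n}\to$ does not stay a $p$-unit: quantitatively, $(p-1)^{2n}\le N_K(\beta_p)$ grows like $p^{2n}$, so if $N_K(\beta_p)$ were a pure power of $p$ it would have to be exactly $p^{2n}$ for $p$ large, but $\beta_p\equiv 1-a_p^2\not\equiv 0 \pmod{p^{2n}}$ since $|N_K(1-a_p^2)|\le 4^n p^n < p^{2n}$ for $p\ge 5$, giving the contradiction cleanly. So in fact $C_n$ can be taken as a small explicit function of $n$, and the odd-$n$ hypothesis is used only to guarantee, via Lemma \ref{sign}, that the ``$\pm$'' sign issue in choosing $\varepsilon$ downstream is consistent — it is not essential to the divisibility argument itself.
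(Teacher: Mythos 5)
The crux of your argument does not close. Your final step derives a contradiction from ``$N_K(\beta_p)=p^{2n}$'' together with $\beta_p\equiv 1-a_p^2\pmod p$ and $|N_K(1-a_p^2)|\le (4p)^n$, but this is a non sequitur: $N_K(\beta_p)=p^{2n}$ is a statement about the ideal norm and does not give $p^{2n}\mid\beta_p$ in $\mathcal O_K$, and the congruence modulo $p$ transfers $\mathfrak p$-adic valuation only up to the ramification index $e(\mathfrak p\mid p)$ at each $\mathfrak p\mid p$, so the most it yields is (roughly) $p^{n}\mid N_K(a_p^2-1)$, which is perfectly compatible with the bound $|N_K(a_p^2-1)|\le (4p)^n$. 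In fact your ingredients are mutually consistent, so no choice of $C_n$ rescues this route: take $a_p=\sqrt{2p+1}$ (an algebraic integer, totally real, $|\sigma(a_p)|=\sqrt{2p+1}\le 2\sqrt p$, and of degree $2$ since $2p+1$ is never a square for $p$ prime); then $(p+1+a_p)(p+1-a_p)=(p+1)^2-(2p+1)=p^2$, so $N_K(\beta_p)=p^4=p^{2n}$, $1-a_p^2=-2p$, and literally no prime of $\bar{\mathbb Z}$ away from $p$ divides the product. This also refutes your closing claim that oddness of $n$ ``is not essential to the divisibility argument'': the lemma is simply false for $n=2$ (for every $p$, so there is no $C_2$), hence any correct proof must use the parity of $n$, and yours never does. (Secondary issues: the case $a_p=\pm1$, where $1-a_p^2=0$, is left unhandled, and $(a_p+p+1)(a_p-p-1)=a_p^2-(p+1)^2$, so your sign conventions for $\beta_p$ and its norm need fixing.)

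The missing idea, which is how the paper argues, is to apply the contradiction hypothesis to \emph{each factor separately}: a prime of $\bar{\mathbb Z}$ dividing the product divides one of the factors, so if all such primes lie over $p$ then each of $N_K(p+1-a_p)$ and $N_K(p+1+a_p)$ is a (positive) power of $p$ lying in the interval $[(\sqrt p-1)^{2n},(\sqrt p+1)^{2n}]$; for $p>C_n$ the only power of $p$ in that interval is $p^n$, so both norms equal $p^n$. Then $P_{a_p}(p+1)=N_K(p+1-a_p)=p^n$ while $P_{a_p}(-p-1)=(-1)^nN_K(p+1+a_p)=-p^n$ precisely because $n$ is odd, and since $P_{a_p}\in\mathbb Z[X]$ the difference $P_{a_p}(p+1)-P_{a_p}(-p-1)=2p^n$ is divisible by $(p+1)-(-p-1)=2(p+1)$; as $p+1$ is coprime to $p$ and greater than $2$... this is absurd, which is the desired contradiction. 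Note how the interval $[(\sqrt p\pm1)^{2n}]$ for the individual factors is much narrower than your interval $[(p\pm1)^{2n}]$ for the product: it pins each norm to the single value $p^n$, and it is this rigidity, combined with the sign flip from odd $n$, that produces the contradiction your version lacks.
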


\begin{proof}
	Let $K=\mathbb Q(a_p)$ and assume that $(p+1+a_p)(p+1-a_p)$ factors as product of
	primes over $p$ in the ring of integers of $K$. 
	Then $N_K(p+1 -a_p)$, $N_K(p+1+a_p)$ are powers of $p$ in
	the  closed interval  
	$I=[(\sqrt p -1)^{2n}, (\sqrt p + 1)^{2n}]$. We can take $p$ great enough so that $p^n$ 
	is the unique power of $p$ in $I$. Thus
	\begin{align*}
		N_K(p+1-a_p) 	&=N_K(p+1+a_p)= p^n\\
		N_K(-p-1-a_p) 	&=(-1)^n N_K(p+1+a_p) = -p^n
	\end{align*}
	In particular $0\equiv P_{a_p}(p+1)-P_{a_p}(-p-1) = 2p^n\pmod{p+1}$.
\end{proof}

	We can describe the bound $C_n$: conditions $p^{n +1}, p^{n-1}\notin ~I$ are equivalent to

	\begin{align*}
		p	&>\left(\dfrac{p}{p - 2\sqrt p +1}\right)^n=:\alpha(p,n),\\
		p	&> \left(\dfrac{p+2\sqrt p+1}{p}\right)^n=:\beta(p,n)
	\end{align*}
	Notice that $\beta<\alpha$ and that $p$ satisfies $p>\alpha(p,n)$ if and only if 
	$x^n >x^{n-1} +1$ where $x^{2n}= p$. Since $n$ is odd we can take $\theta$ the 
	greatest real root of $X^n- X^{n-1} -1$ and $C_n:=\theta^{2n}$. Notice that $C_n/n^2$ 
	has finite limit.

\begin{lemma}\label{rationalavoid}
	The best bound for $n=1$ is $C_1=2$.
\end{lemma}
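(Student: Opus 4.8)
The plan is to establish the two halves of the statement separately: that $C_1=2$ is admissible, and that $C_1=1$ is not.

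For admissibility, fix a prime $p\ge 3$ and a $p$-th Fourier coefficient $a_p$ of degree $1$, so that $a_p\in\mathbb Z$ and $|a_p|\le 2\sqrt p$ by Theorem~\ref{Hasse}. Two elementary inequalities come from $p\ge 3$: since $(\sqrt p-1)^2>0$ we have $2\sqrt p<p+1$, so that $p+1+a_p$ and $p+1-a_p$ are positive integers; and since $(p-1)^2>3$ we have $2(p+1)<p^2$, so that $0<p+1\pm a_p<p^2$. Suppose, toward a contradiction, that no prime $\mathfrak l$ of $\bar{\mathbb Z}$ outside $p$ divides $(a_p+p+1)(a_p-p-1)$. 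Then the positive integer $(p+1+a_p)(p+1-a_p)$, which is the absolute value of this product, has all its prime divisors equal to $p$, hence is a power of $p$; as its two factors are positive integers, each of them is a power of $p$ as well, so by the bound above each equals $1$ or $p$. But their sum is $2(p+1)=2p+2$, whereas a sum of two elements of $\{1,p\}$ can only be $2$, $p+1$, or $2p$, none of which is $2p+2$ for $p\ge 3$---a contradiction. Hence some $\mathfrak l$ not over $p$ divides $(a_p+p+1)(a_p-p-1)$, and $p>2$ already suffices in Lemma~\ref{potential} when $n=1$.

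For optimality, I would exhibit a failure at $p=2$. The value $a_2=1$ is a $p$-th Fourier coefficient of degree $1$ for $p=2$, since $1\in\mathbb Z$ and $|1|\le 2\sqrt 2$; for it one has $(a_2+3)(a_2-3)=4\cdot(-2)=-8$, and every prime of $\bar{\mathbb Z}$ dividing $-8$ lies over $2$. Thus no prime $\mathfrak l$ outside $p$ divides $(a_2+p+1)(a_2-p-1)$, so $C_1=1$ is not admissible, i.e.\ $C_1\ge 2$; together with the first half this gives $C_1=2$.

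No step here is a genuine obstacle; the only point worth spelling out is why the general recipe behind Lemma~\ref{potential} does not by itself produce the sharp constant. For $n=1$ that recipe needs $p$ large enough that $p^{n-1}=1$ avoids the interval $I=[(\sqrt p-1)^2,(\sqrt p+1)^2]$, which forces $p>4$ and so yields only $C_1=4$; the improvement is that the single remaining prime $p=3$ can be treated directly---as above, via $2(p+1)<p^2$, an inequality already valid at $p=3$---whereas at $p=2$ both powers $1$ and $2$ of $p$ lie in $I$ and the method genuinely collapses, exactly as the counterexample $a_2=1$ shows.
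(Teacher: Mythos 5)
Your proof is correct and follows essentially the same route as the paper's: both arguments come down to observing that if every prime dividing $(p+1+a_p)(p+1-a_p)$ lay over $p$, then the two positive integer factors would be powers of $p$ below $p^2$, which is incompatible with their sum being $2(p+1)$, and both use $a_2=\pm1$ at $p=2$ (where the product is $8$, a pure power of $2$) to show the bound is sharp. The only difference is that you handle all $p\ge 3$ uniformly, whereas the paper first quotes the general bound $C_1=4$ coming from Lemma~\ref{potential} and then disposes of the single remaining prime $p=3$ by the impossibility of $3^a+3^b=8$.
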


\begin{proof}
	Notice that $(p+1)^2 - a^2_p=1$ if $p=2$, $a_p= \pm 1$.
	Following the notation  above we have that $\theta=2$ for $n=1$ and $C_1=4$ works. 
	Thus it is enough to check that $(4 -a_3)(4 +a_3)$ is not $\pm$ a power of $3$. Both
	factors are positive by Hasse's bound. Thus $4+a_3=3^a$, $4-a_3=3^b$ and
	$3^a + 3 ^b = 8$.
\end{proof}

\section{Proofs}\label{proofs}
\subsection{Proof of main result and variant}
\begin{proof}[Proof of Theorem \ref{thm1}]
	Let $f\in S_2(N)$ new and $p\nmid N$. We need to check that Ribet's theorem applies for
	some $\mathfrak l$. By Lemmas \ref{Hasse} and \ref{sign}, $(p+1+a_p)(p+1-a_p)$ is not 
	invertible in $\bar{\mathbb Z}$. 
	Hence it is contained in a maximal ideal $\mathfrak l$. 
	That is, either $a_p\equiv p+1\pmod{\mathfrak l}$ or $a_p\equiv -p-1\pmod{\mathfrak l}$.
\end{proof}
	Following variant allows us to do level-raising at $p$ over characteristic $\ell\neq p$.
	This together with Corollary \ref{congGcong} ensures that the predicted Galois-congruency
	is not a cogruence of \emph{all} Fourier coefficients, at least when the level-raising is at
	$p\neq 2$.
	
\begin{theorem}\label{avoid}
	Let $f$ be a newform in $S_2(N)$ such that $n:=\dim_{\,\mathbb Q} K_f$ is odd.
	Assume that 
	
	$\boldsymbol{(AbsIrr)}$ $\bar\rho_{f,\mathfrak l}$ is absolutely irreducible for every $\mathfrak l$.
	
	There exists a constat $C>0$ such that for every prime $p>C$ $f$ has a level-raising $g$ at $p$
	over a prime $\mathfrak l$ of residue characteristic different from $p$. $C$ depens only on
	$n$.
\end{theorem}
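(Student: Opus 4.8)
The plan is to deduce this directly from Lemma~\ref{potential} (``Avoiding $p$'') together with Ribet's level raising theorem (Theorem~\ref{Ribet}). First I would pin down the constant: since $n$ is odd, every divisor $d$ of $n$ is odd, so the constant $C_d$ of Lemma~\ref{potential} is defined for each $d \mid n$, and I would set $C := \max\{C_d : d \mid n\}$, which depends only on $n$.

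Now let $p > C$ be a prime (not dividing $N$, as is tacit when speaking of a level-raising at $p$). By Theorem~\ref{Hasse}, $a_p(f)$ is a $p$-th Fourier coefficient, and its degree $d := [\mathbb Q(a_p(f)):\mathbb Q]$ divides $n = \dim_{\mathbb Q}\mathbb Q_f$. Since $p > C \geq C_d$, Lemma~\ref{potential} produces a prime $\mathfrak l$ of $\bar{\mathbb Z}$ whose residue characteristic $\ell$ is different from $p$ and which divides $(a_p(f)+p+1)(a_p(f)-p-1)$; equivalently $a_p(f) \equiv \varepsilon(p+1) \pmod{\mathfrak l}$ for some $\varepsilon \in \{\pm1\}$. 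With $\boldsymbol{(AbsIrr)}$ guaranteeing that $\bar\rho_{f,\mathfrak l}$ is absolutely irreducible, Theorem~\ref{Ribet} then yields a newform $g \in S_2(pM)$ with $M \mid N$ and $\bar\rho_{f,\mathfrak l} \simeq \bar\rho_{g,\mathfrak l}$. Since $p \nmid N$ forces $p \nmid M$, we have $p \parallel pM$, so $g$ is a level-raising of $f$ at $p$, and it occurs over $\mathfrak l$ with $\ell \neq p$, which is exactly the assertion.

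The only point that needs care is that $a_p(f)$ need not generate $\mathbb Q_f$, so its degree $d$ may be a proper divisor of $n$; this is precisely why I take $C$ to be the maximum of the $C_d$ over $d \mid n$ rather than just $C_n$. (Alternatively, one could invoke the remark following Lemma~\ref{potential} that $C_n/n^2$ has a finite limit, hence $C_n$ is eventually nondecreasing, to see that $C_n$ itself works for all sufficiently large odd $n$; but the maximum over divisors is cleaner and carries no ``sufficiently large'' caveat.) Apart from this bookkeeping there is no genuine obstacle: the argument is simply the reduction of Theorem~\ref{avoid} to the quantitative input already packaged in Lemma~\ref{potential} and the qualitative input of Theorem~\ref{Ribet}.
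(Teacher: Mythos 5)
Your proposal is correct and follows the same route as the paper: $\boldsymbol{(AbsIrr)}$ reduces the problem to producing a congruence prime $\mathfrak l$ not over $p$, which Lemma~\ref{potential} supplies, and Ribet's theorem (Theorem~\ref{Ribet}) then gives the $p$-new form. Your extra bookkeeping --- taking $C=\max\{C_d : d\mid n\}$ because $[\mathbb Q(a_p(f)):\mathbb Q]$ may be a proper (odd) divisor of $n$ --- is a sensible precision that the paper's two-line proof leaves implicit, and it does not change the argument.
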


\begin{proof}
	Let $f\in S_2(N)$ new. Due to $\boldsymbol{(AbsIrr)}$ it is enough to find a maximal ideal
	$\mathfrak l$ not over $p$. This is done in Lemma \ref{potential}.
\end{proof}

\subsection{Choice of sign mod $2$}\label{mod2}

	In this section we adapt some ideas of \cite{LeHungLi} to our case.
	The strategy is to solve a finitely ramified deformation problem. This kind of deformation
	problem consists on specifying the ramification behavior at all but one chosen prime $q$.  
	If such a deformation problem has solution and some modularity theorem applies
	this provides newforms with specified weight, character and prime-to-$q$ part level.
	If one chooses an auxiliary prime $q$, a twist argument kills the ramification at $q$
	so that one recovers a newform with the specified weight, character and level.
	
	Fix a prime ideal $\mathfrak l\ni 2$ of $\bar{\mathbb Z}$.  Let $\rho_2$ be
	a Galois representation $G_{\mathbb Q}\rightarrow \SL_2(\mathbb F_{\mathfrak l})$
	with \emph{dihedral image} $D$ and let $E=\bar{\mathbb Q}^D$ be the number field 
	fixed by $\ker \rho_2$. The order of an element in $\SL_2(\mathbb F_{\mathfrak l})$
	is either $2$ or odd. This forces $D$ to have order $2r$, $2\nmid r$. In particular 
	$E\mid\mathbb Q$ has a unique quadratic subextension $K\mid \mathbb Q$ and $\rho_2$ is
	induced from a character 
	$
	\chi: \Gal(\bar{\mathbb Q}\mid K)\rightarrow \mathbb F_{\mathfrak l}^\times
	$ of order $r$.
	
	We say that $q$ is an \emph{auxiliary prime} for $\rho_2$ if
\begin{itemize}
	\item $q\equiv 3 \pmod 4$ and
	\item $\rho_2$ is unramified at $q$ and $\rho_2(Frob_{q})$ is non-trivial of odd order.
\end{itemize}

\begin{proposition}\label{twist}
	Let $g$ be a newform in $S_2(Mq^\alpha)$, $q\nmid M$, such that $\bar{\rho}_{g,\mathfrak l}$
	 is unramified at an auxiliary prime $q$. Then either $g$ or $g\otimes \genfrac(){}{}{\cdot}{q}$
	 has level $M$.
\end{proposition}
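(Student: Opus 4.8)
Everything reduces to a local statement at $q$. We may assume $\alpha\geq1$, since otherwise $g$ already has level $M$. Write $\pi_v$ for the local component at a place $v$ of the automorphic representation attached to $g$, so that the conductor of $\pi_q$ is $q^\alpha$ and $\prod_{v\neq q}\mathrm{cond}(\pi_v)=M$. Let $\psi=\genfrac(){}{}{\cdot}{q}$, the quadratic Dirichlet character of conductor exactly $q$. By the theory of twists of newforms (Atkin--Li), $g\otimes\psi$ is associated to a newform with local components $\pi_v\otimes\psi_v$; since $\psi_v$ is unramified for $v\neq q$ and the conductor is unchanged by unramified twists, the level of $g\otimes\psi$ equals $M\cdot\mathrm{cond}(\pi_q\otimes\psi_q)$. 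So it is enough to show that $\pi_q\otimes\psi_q$ is unramified.

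The key consequence of the hypothesis is a rigidity statement for Frobenius. Since $q$ is auxiliary, $\bar\rho_{g,\mathfrak l}$ is unramified at $q$, so $A:=\bar\rho_{g,\mathfrak l}(Frob_q)$ is defined; it lies in $\SL_2(\mathbb F_{\mathfrak l})$ because $\det\bar\rho_{g,\mathfrak l}=\omega_2$ is trivial, and by definition of an auxiliary prime $A$ is nontrivial of odd order. In characteristic $2$ an element of odd order is semisimple; its eigenvalues are $\zeta,\zeta^{-1}$ with $\zeta\neq\zeta^{-1}$ (otherwise $\zeta^2=1$, so $\zeta=1$, contradicting nontriviality). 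Thus $A$ has two distinct eigenvalues, and this is the constraint I will play against the possible shapes of $\pi_q$.

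Next I would run through the classification of the ramified representation $\pi_q$ (it is ramified as $\alpha\geq1$, with trivial central character since $g$ has trivial nebentypus) and eliminate all but one shape. If $\pi_q$ is a twist of the Steinberg representation by a character (at most quadratic, by the central character), then mod $\mathfrak l$ the two diagonal characters of $\bar\rho_{g,\mathfrak l}\vert_{G_q}$ coincide, since $\omega_2$ and any quadratic character reduce to the trivial character; hence either $\bar\rho_{g,\mathfrak l}$ is ramified at $q$, or $Frob_q$ acts with a repeated eigenvalue -- either way contradicting the hypotheses on $q$. If $\pi_q$ is supercuspidal it is dihedral (as $q$ is odd), induced from a quadratic $K/\mathbb Q_q$: when $K/\mathbb Q_q$ is ramified, $\bar\rho_{g,\mathfrak l}\vert_{I_q}$ still detects $K$ and is nontrivial, contradicting unramifiedness at $q$; when $K/\mathbb Q_q$ is unramified, $Frob_q$ acts on $\bar\rho_{g,\mathfrak l}\vert_{G_q}$ with characteristic polynomial $X^2-c$, a perfect square over $\overline{\mathbb F}_2$, so again $A$ has a repeated eigenvalue -- contradiction. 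What remains is a ramified principal series $\pi_q\simeq\pi(\mu,\mu^{-1})$; unramifiedness of $\bar\rho_{g,\mathfrak l}$ at $q$ forces $\mu\vert_{\mathbb Z_q^\times}$ to be a nontrivial character of $2$-power order, and since the pro-$q$ part of $\mathbb Z_q^\times$ supports no such character while $v_2(q-1)=1$ (this is where $q\equiv3\pmod 4$ enters), this forces $\mathrm{cond}(\mu)=q$ with $\mu\vert_{\mathbb Z_q^\times}$ the unique quadratic character of $\mathbb Z_q^\times$. Finally $\psi_q\vert_{\mathbb Z_q^\times}$ is that same character, so $\mu\psi_q$ and $\mu^{-1}\psi_q$ are unramified; hence $\pi_q\otimes\psi_q\simeq\pi(\mu\psi_q,\mu^{-1}\psi_q)$ is an unramified principal series and $g\otimes\psi$ has level $M$.

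I expect the third paragraph to be the crux, and the place where the hypotheses genuinely bite. The residue characteristic being $2$ is what makes the argument delicate: the mod $2$ cyclotomic character is trivial and $-1\equiv1$, so ``$\bar\rho_{g,\mathfrak l}$ unramified at $q$'' alone does not rule out the Steinberg or the unramified-supercuspidal shapes of $\pi_q$ -- one really has to use that $A=\bar\rho_{g,\mathfrak l}(Frob_q)$ has two distinct eigenvalues, which is exactly the odd-order part of ``auxiliary''. One must also be careful that, since $\bar\rho_{g,\mathfrak l}\vert_{G_q}$ need not be semisimple, the right statement throughout is about the eigenvalues of $Frob_q$ (equivalently, the characteristic polynomial), not about a direct-sum splitting. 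The congruence $q\equiv3\pmod 4$ is used only at the end, to identify the surviving tame character of $\mathbb Z_q^\times$ with the local component of $\genfrac(){}{}{\cdot}{q}$.
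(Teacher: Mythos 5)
Your route is genuinely different from the paper's. The paper works entirely on the Galois side: it invokes Lemma 3.4 of \cite{LeHungLi}, which says that a lift of a residually unramified local representation whose $Frob_q$ has distinct eigenvalues splits as a sum of two characters, so that $\rho_{g,\mathfrak l}\vert_{I_q}$ factors through the unique quadratic character of tame inertia; the congruence $q\equiv 3\pmod 4$ then identifies that character with the restriction of $\genfrac(){}{}{\cdot}{q}$ (whose field $\mathbb Q(\sqrt{-q})$ is ramified only at $q$), and twisting kills the ramification. You instead re-derive the needed local statement by running through the local Langlands classification of $\pi_q$ and eliminating the Steinberg and supercuspidal shapes. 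Both approaches work; the paper's is shorter because it black-boxes the lifting lemma, while yours makes explicit where each hypothesis bites and correctly isolates the principal-series case as the one where $q\equiv 3\pmod 4$ identifies the surviving tame character with the Legendre symbol. Your reduction of the level computation to local conductors via Atkin--Li is fine, and your Steinberg and unramified-supercuspidal eliminations are correct (trace of any Frobenius lift is $0$ in the unramified-induction case, so the characteristic polynomial of $\bar\rho_{g,\mathfrak l}(Frob_q)$ is $X^2-\bar q=(X-1)^2$ in characteristic $2$).

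There is, however, one genuine gap: the ramified dihedral supercuspidal case. You dismiss it by asserting that $\bar\rho_{g,\mathfrak l}\vert_{I_q}$ ``still detects $K$ and is nontrivial.'' In residue characteristic $2$ this is not automatic: if $\chi\vert_{I_K}$ has $2$-power order (such supercuspidals exist for odd $q$), then $\rho_{g,\mathfrak l}(I_q)$ is a finite $2$-group, and the semisimplified mod $\mathfrak l$ reduction --- which is what $\bar\rho_{g,\mathfrak l}$ is --- can be trivial on $I_q$, so unramifiedness alone does not rule this shape out. This is exactly the pitfall you flag yourself in your last paragraph but do not apply to this subcase. The fix is the same trace argument you use for unramified $K$: since $G_q=I_qG_K$ when $K/\mathbb Q_q$ is ramified, some lift $\tilde F$ of Frobenius in the coset $Frob_q\cdot I_q$ lies outside $G_K$, hence $tr\,\rho_{g,\mathfrak l}(\tilde F)=0$; as $\bar\rho_{g,\mathfrak l}$ is trivial on $I_q$, the element $\bar\rho_{g,\mathfrak l}(Frob_q)$ has characteristic polynomial $X^2-1=(X-1)^2$ over $\overline{\mathbb F}_2$, contradicting the distinct-eigenvalue consequence of the odd-order condition. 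With that repair your proof is complete.
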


\begin{proof}
	$\bar{\rho}_{g,\mathfrak l}(Frob_{q})$ has different eigenvalues by the order condition, 
	thus $\rho_{g,\mathfrak l}\vert_{I_q}$ factors through a quadratic character $\eta$ of 
	$I_q$ (Lemma 3.4 in \cite{LeHungLi}). By the structure of tame inertia at $q\neq 2$ there 
	is a unique open subgroup in $I_q$ of index $2$ and $\eta:I_q\twoheadrightarrow 
	\Gal(\mathbb Q_q^{ur}(\sqrt q)\mid \mathbb Q_q^{ur})\simeq \{\pm 1\}$. 
	If $\eta$ is trivial then $\alpha=0$ and we are done. 
	Otherwise, $\eta$ extends locally to $G_q \twoheadrightarrow \Gal(\mathbb Q_q(\sqrt{-q})
	\mid	\mathbb Q_q)$ and globally to the Legendre symbol
$$
	\genfrac (){}{0}{\cdot}{q}: G_\mathbb Q\twoheadrightarrow 
	\Gal(\mathbb Q(\sqrt{-q})\mid \mathbb Q)\simeq \{\pm 1\}.
$$
	Legendre symbol over $q$ is only ramified at $q$ and the proposition follows.
\end{proof}

	 Auxiliary primes are inert at $\mathbb Q(i)$ and split at $K$ by a parity argument.
	 In particular, $\rho_2$ has auxiliary primes only if $K\neq \mathbb Q(i)$.

\begin{lemma}
	Let $\rho_2:G_{\mathbb Q}\rightarrow \SL_2(\mathbb F_{\mathfrak l})$ be a Galois 
	representation as above. Assume that $\rho_2$ is not ramified at $p$ and that
	$K\neq \mathbb Q(i)$. Then the set of auxiliary primes for $\rho_2$ splitting at 
	$\mathbb Q(\sqrt p)$ has positive density in the set of all primes. 	
\end{lemma}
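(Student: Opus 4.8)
The plan is to package all three conditions defining an auxiliary prime, together with the extra splitting condition at $\mathbb Q(\sqrt p)$, as Frobenius conditions in a single finite Galois extension of $\mathbb Q$ and then apply the Chebotarev density theorem; the only genuine work will be a short computation showing that the relevant quadratic fields are ``as independent as possible''.

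First I would pin down which elements of $D$ are being selected. Identifying $\Gal(E\mid\mathbb Q)$ with $D$ via $\rho_2$, the normal subgroup $C_r=\Gal(E\mid K)$ is exactly the set of odd-order elements of $D$: an element not in $C_r$ squares into $C_r$, and since every element of $\SL_2(\mathbb F_{\mathfrak l})$ has order $2$ or odd, it must have order $2$. So for a prime $q$ unramified in $E$, the condition ``$\rho_2(\mathrm{Frob}_q)$ is non-trivial of odd order'' is equivalent to ``$\mathrm{Frob}_q|_E\in C_r\setminus\{1\}$'', which in particular forces $q$ to split in $K$; and since the dihedral group $D$ is non-abelian we have $r\ge 3$, so $C_r\setminus\{1\}\neq\emptyset$.

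Next I would take $L=E\cdot\mathbb Q(i)\cdot\mathbb Q(\sqrt p)$. The key point is that $E\cap\mathbb Q(i,\sqrt p)=\mathbb Q$: the field $E$ has a unique quadratic subfield, namely $K$, while $\mathbb Q(i)$, $\mathbb Q(\sqrt p)$, $\mathbb Q(\sqrt{-p})$ are the quadratic subfields of $\mathbb Q(i,\sqrt p)$; since $\rho_2$ is unramified at $p$, so are $E$ and its subfield $K$, which excludes $\mathbb Q(\sqrt p)$ and $\mathbb Q(\sqrt{-p})$, and $K\neq\mathbb Q(i)$ by hypothesis. Hence no quadratic subfield of $\mathbb Q(i,\sqrt p)$ lies in $E$, the intersection collapses to $\mathbb Q$, and therefore $\Gal(L\mid\mathbb Q)\cong D\times(\mathbb Z/2\mathbb Z)^2$, the second factor recording the actions on $i$ and on $\sqrt p$. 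I expect this intersection computation to be the only non-formal step, and it is exactly where both hypotheses pull their weight: were $K=\mathbb Q(\sqrt{-p})$, the conditions ``$q$ split in $K$'' and ``$q$ split in $\mathbb Q(\sqrt p)$'' would force $q\equiv 1\pmod 4$, leaving no auxiliary primes at all.

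Finally I would let $\mathcal C\subseteq\Gal(L\mid\mathbb Q)=D\times(\mathbb Z/2\mathbb Z)^2$ be the set of elements whose $D$-component lies in $C_r\setminus\{1\}$, whose action on $i$ is non-trivial, and whose action on $\sqrt p$ is trivial. Then $\mathcal C$ is non-empty and stable under conjugation, since $C_r$ is normal in $D$ and the abelian factor is central. Chebotarev then yields that the set of primes $q$ with $\mathrm{Frob}_q\in\mathcal C$ has density $|\mathcal C|/|\Gal(L\mid\mathbb Q)|>0$. For such $q$ — automatically unramified in $L$, hence odd and with $\rho_2$ unramified at $q$ — the membership $\mathrm{Frob}_q\in\mathcal C$ unwinds precisely to: $q\equiv 3\pmod 4$, $\rho_2(\mathrm{Frob}_q)$ non-trivial of odd order, and $q$ split in $\mathbb Q(\sqrt p)$; that is, $q$ is an auxiliary prime for $\rho_2$ that splits in $\mathbb Q(\sqrt p)$. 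This produces the desired positive-density set.
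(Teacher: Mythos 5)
Your argument is correct and is essentially the paper's proof: the paper likewise deduces linear disjointness of $E$ and $\mathbb Q(i,\sqrt p)$ from the unramifiedness of $E$ at $p$ and the hypothesis $K\neq\mathbb Q(i)$, and then invokes Chebotarev. You have simply made explicit the conjugation-stable set $\mathcal C\subseteq D\times(\mathbb Z/2\mathbb Z)^2$ and the intersection computation that the paper leaves implicit.
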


\begin{proof}
	As in Lemma 3.2 of \cite{LeHungLi} $E$ and $\mathbb Q(i, \sqrt p)$ are linearly disjoint
	since $E$ is unramified at $p$ and $K\neq \mathbb Q(i)$. Chebotarev density theorem 
	implies the lemma.
\end{proof}

\begin{theorem}
	Let $f$ be a newform in $S_2(N)$, $p$ be a prime not dividing $6N$ and $\varepsilon
	\in \{\pm 1\}$ a sign.
	Assume that $a_p\equiv 1+p$ mod $\mathfrak l$ for some prime $\mathfrak l$
	containing $2$.
	Assume that
\begin{enumerate}
	\item
	$\bar\rho_{f,\mathfrak l}$ has dihedral image induced from a 
	real quadratic extension, and 
	\item
	$\boldsymbol{(2Ord)}$ $\bar\rho_{f,\mathfrak l}\vert_{G_{2}}\simeq
	\left(
	\begin{array}{cc}
	1	&*\\
		&1	
	\end{array}
	\right)$.
\end{enumerate}
	then there exists some $M$ dividing $N$ and some newform $g$ in $S_2(Mp)$ such that
	$f$ and $g$ are Galois-congruent and $a_p(g) = \varepsilon$.
\end{theorem}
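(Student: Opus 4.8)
The plan is to realise the desired $g$ as a (twist of a) newform attached to a carefully chosen $\mathfrak l$-adic lift of $\bar\rho:=\bar\rho_{f,\mathfrak l}$, with the sign $\varepsilon$ built into the local condition at $p$. First I would fix an auxiliary prime $q$ for $\bar\rho$ in the sense of the previous subsection (so $q\equiv 3\pmod 4$, $q\nmid 2Np$, $\bar\rho$ unramified at $q$ with $\bar\rho(\mathrm{Frob}_q)$ non-trivial of odd order) which in addition splits in $\mathbb Q(\sqrt p)$; such $q$ exist with positive density by the Lemma above, using $K\neq\mathbb Q(i)$, which holds because $K$ is real by $\boldsymbol{(DiehReal)}$. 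The splitting condition at $\mathbb Q(\sqrt p)$ guarantees $\bigl(\tfrac{p}{q}\bigr)=1$, i.e.\ the Legendre symbol $\bigl(\tfrac{\cdot}{q}\bigr)$ is trivial on $\mathrm{Frob}_p$, which will matter at the very end.

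Next I would set up a global deformation problem for $\bar\rho$ with determinant the $\ell$-adic cyclotomic character ($\ell=2$, so $\rho$ is automatically odd): unramified outside $2Npq$; minimally ramified at primes dividing $N$ (so that the prime-to-$pq$ conductor divides $N$); at $2$, ordinary, which is an admissible condition precisely because of $\boldsymbol{(2Ord)}$; at $q$, ramified through the quadratic character $\eta$ of $I_q$ cutting out $\mathbb Q_q^{\mathrm{ur}}(\sqrt q)$ acting as the scalar $\eta\cdot\mathrm{id}$ on inertia, so that the determinant stays cyclotomic; and at $p$, Steinberg of the shape $\rho|_{G_p}\sim\left(\begin{smallmatrix}\omega_\ell&*\\&1\end{smallmatrix}\right)\otimes\lambda_\varepsilon$ with $*$ non-split, which is exactly the condition pinning down $a_p=\varepsilon$. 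The ramification allowed at the single auxiliary prime $q$ is what makes the numerics of the deformation problem balance (the global Euler characteristic / the dual Selmer obstruction), as in \cite{LeHungLi}; a Galois-cohomology computation then shows the problem is solvable, producing a lift $\rho\colon G_\mathbb Q\to\GL_2(\overline{\mathbb Z}_\ell)$ of $\bar\rho$ with all the prescribed local behaviour.

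Then I would prove that $\rho$ is modular. Since $\bar\rho$ has dihedral image induced from the real quadratic field $K$, it becomes reducible on $G_K$, and $\rho$ is nearly ordinary at $\ell=2$; hence $\rho$ lies in the scope of the $2$-adic residually-dihedral, nearly-ordinary modularity lifting theorems of \cite{Allen}, whose hypotheses (in particular $K$ totally real and the ordinarity at $2$) are guaranteed by $\boldsymbol{(DiehReal)}$ and $\boldsymbol{(2Ord)}$, and which internally carry out the base change to $K$ together with cyclic descent. This yields a newform $g'$ of weight $2$ and trivial Nebentypus with $\rho_{g',\mathfrak l}\cong\rho$, hence $\bar\rho_{g',\mathfrak l}\cong\bar\rho_{f,\mathfrak l}$, with $q^2\parallel$ level and prime-to-$q$ level part $Mp$ for some $M\mid N$, and with $a_p(g')=\varepsilon$ (read off from the Steinberg condition at $p$ via Theorem \ref{local}); moreover $\bar\rho_{g',\mathfrak l}$ is unramified at $q$ because $\eta$ is trivial modulo $2$. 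Finally I would remove the auxiliary prime using Proposition \ref{twist}: since $\bar\rho_{g',\mathfrak l}$ is unramified at $q$, either $g'$ or $g:=g'\otimes\bigl(\tfrac{\cdot}{q}\bigr)$ is a newform of level $Mp$; the twist leaves the Nebentypus trivial (as $\bigl(\tfrac{\cdot}{q}\bigr)^2=1$) and leaves $\bar\rho_{g,\mathfrak l}\cong\bar\rho_{f,\mathfrak l}$ unchanged (as $\bigl(\tfrac{\cdot}{q}\bigr)\equiv 1\pmod{\mathfrak l}$), so $f$ and $g$ are Galois-congruent over $\mathfrak l$, and $a_p(g)=\bigl(\tfrac{p}{q}\bigr)a_p(g')=\varepsilon$ by the choice of $q$.

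I expect the genuine difficulty to be entirely in the modularity step: at the prime $2$ with residually dihedral image one cannot invoke the standard modularity lifting theorems, and it is exactly the hypotheses $\boldsymbol{(DiehReal)}$ and $\boldsymbol{(2Ord)}$, feeding into the techniques of \cite{LeHungLi} and the theorems of \cite{Allen}, that rescue the argument. The remaining work — choosing the local deformation conditions so that the resulting newform has trivial character and the prescribed $a_p$, and checking that the deformation problem with one auxiliary prime meets the precise hypotheses (local types, oddness, the auxiliary-prime bookkeeping) of those theorems — is technical but routine.
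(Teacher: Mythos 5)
Your proposal is correct and follows essentially the same route as the paper: choose an auxiliary prime $q$ for $\bar\rho_{f,\mathfrak l}$ splitting in $\mathbb Q(\sqrt p)$ (available since $K$ is real, hence $K\neq\mathbb Q(i)$), produce a form of level $Npq^\alpha$ with $a_p=\varepsilon$, and remove $q$ by the twist of Proposition \ref{twist}, using $\genfrac(){}{}{p}{q}=1$ to preserve $a_p$. The only difference is that the paper invokes Theorem 4.2 of \cite{LeHungLi} as a black box for the middle step, whereas you sketch its interior (the finitely ramified deformation problem and the modularity theorem of \cite{Allen}); this is a faithful account of what that citation hides, not a different argument.
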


\begin{proof}
	Let $q$ be an auxiliary prime for $\bar\rho_{f,\mathfrak l}\vert_{G_2}$ splitting 
	at $\mathbb Q(\sqrt p)$. 
	By Theorem 4.2 of 
	\cite{LeHungLi} there is some newform $g$ in $S_2(Npq^\alpha)$ with
	$a_p(g) = \varepsilon$.  Let $g'$ be the newform in $S_2(Np)$ obtained by Proposition 
	\ref{twist}. Then $a_p(g') = a_p(g)$ since $\genfrac (){}{}{p}{q}=1$.
\end{proof}

\begin{proof}[Proof ot Theorem \ref{thm2}]
	By Lemma \ref{sign} there are some maximal ideals $\mathfrak l^+$, $\mathfrak l^-$
	such that 
	$a_p(f) \equiv p+1 \pmod{\mathfrak l^+}$
	 and 
	 $a_p(f) \equiv -p-1 \pmod{\mathfrak l^-}$.
	 If $2\notin \mathfrak l^+, \mathfrak l^-$ then Ribet's theorem applies and we are done.
	 Otherwise apply previous theorem.
\end{proof}


\section{Case $n=1$. Elliptic curves and $\mathbb Q$-isogenies}\label{section curves}
	Let $E/\mathbb Q$ be an elliptic curve and let $E_p/\mathbb F_p$ be the mod $p$ reduction of 
	(the Néron model of) $E$ for a prime $p$. 
	Consider the integer $c_p= p+1-\#E_p$. 
	Then there is a unique newform $f$ of weight $2$ such that $a_p(f) = c_p$ for every prime $p$. 
	This is a consequence of modularity of elliptic curves over $\mathbb Q$. 
	In particular,  $\bar{\rho}_{f,\mathfrak l}$ and $E[\ell]\otimes \mathbb F_{\mathfrak l}$ are
	isomorphic up to semisimplification for every prime $\mathfrak l$. 
	In this section we characterize elliptic curves whose corresponding newform $f$ satisfies 
	$\boldsymbol{(AbsIrr)}$. 

	Let $E/\mathbb Q$ be an elliptic curve, $\ell$ an odd prime and $c\in\Gal(\mathbb C\mid 
	\mathbb R)\subset \Gal(\bar{\mathbb Q}\mid\mathbb Q)$ be the complex conjugation. 
	Then $c$ acts on $E[\ell]$ with characteristic polynomial $X^2 -1$. 
	This follows from the existence of Weil pairing. In particular $E[\ell]$ is 
	irreducible if and only
	if $E[\ell]\otimes \mathbb F_{\mathfrak l}$ is irreducible. 
	We say that $E$ satisfies $\boldsymbol{(Irr)}$ if $E[\ell]$ is irreducible for every $\ell$.  
	From a particular study of the case $\ell=2$ one obtains the 
\begin{lemma}
	Let $E/\mathbb Q$ be an elliptic curve. Then $E$ satisfies $\boldsymbol{(AbsIrr)}$ if and only 
	$E$ satisfies $\boldsymbol{(Irr)}$ and $\mathbb Q(E[2])$ has degree 6 over $\mathbb Q$.
\end{lemma}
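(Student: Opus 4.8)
The plan is to reduce everything to a case analysis on the image of the mod-$2$ representation, since the odd primes are handled immediately by the complex-conjugation argument already recorded above.

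First I would dispatch the odd primes. For odd $\ell$ the excerpt notes that complex conjugation $c$ acts on $E[\ell]$ with characteristic polynomial $X^2-1$, whose roots $\pm1$ lie in $\mathbb{F}_\ell$; hence the only candidate $G_{\mathbb{Q}}$-stable lines in $E[\ell]\otimes\overline{\mathbb{F}}_\ell$ are the two eigenlines of $c$, and these are already defined over $\mathbb{F}_\ell$. Consequently $E[\ell]$ is irreducible over $\mathbb{F}_\ell$ if and only if it is absolutely irreducible. Since $\mathbb{F}_{\mathfrak l}=\overline{\mathbb{F}}_\ell$ is algebraically closed and a $2$-dimensional representation is irreducible exactly when its semisimplification is, this says: for odd $\ell$, $\bar\rho_{f,\mathfrak l}$ is absolutely irreducible $\iff$ $E[\ell]$ is irreducible. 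So $\boldsymbol{(AbsIrr)}$ restricted to odd residue characteristics is literally $\boldsymbol{(Irr)}$ restricted to odd primes.

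Next I would analyze $\ell=2$. Here $\bar\rho_{E[2]}\colon G_{\mathbb{Q}}\to\GL_2(\mathbb{F}_2)\cong S_3$, and $[\mathbb{Q}(E[2]):\mathbb{Q}]$ is the order of the image $G$. I would run through the subgroups of $S_3$: if $|G|\le 2$ then $G$ is trivial or generated by a transposition, which fixes a line in $\mathbb{F}_2^2$, so $E[2]$ is reducible over $\mathbb{F}_2$; if $|G|=3$ then $G$ is cyclic of order prime to $2$, hence diagonalizable over $\overline{\mathbb{F}}_2$ with eigenvalues in $\mathbb{F}_4$, so $E[2]\otimes\overline{\mathbb{F}}_2$ is reducible even though $E[2]$ is irreducible over $\mathbb{F}_2$; and if $|G|=6$, i.e. $G=S_3$, then the two eigenlines of an order-$3$ element are interchanged by any transposition, so no line is $G$-stable over $\overline{\mathbb{F}}_2$ and $E[2]$ is absolutely irreducible. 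Passing to semisimplifications changes none of these conclusions. Therefore $\bar\rho_{f,\mathfrak l}$ is absolutely irreducible for $\mathfrak l\ni 2$ if and only if $[\mathbb{Q}(E[2]):\mathbb{Q}]=6$, and this last condition already forces $E[2]$ to be irreducible over $\mathbb{F}_2$.

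Finally I would assemble the two halves: combining the above, $\boldsymbol{(AbsIrr)}$ holds $\iff$ $E[\ell]$ is irreducible for every odd $\ell$ and $[\mathbb{Q}(E[2]):\mathbb{Q}]=6$. Since degree $6$ implies irreducibility of $E[2]$ over $\mathbb{F}_2$, one may append ``$E[2]$ irreducible'' to the right-hand side at no cost, which upgrades ``$E[\ell]$ irreducible for every odd $\ell$'' to $\boldsymbol{(Irr)}$; this is exactly the claimed equivalence. I expect the only delicate point to be the $|G|=3$ case, namely observing that an order-$3$ image gives a representation irreducible over $\mathbb{F}_2$ but split over $\mathbb{F}_4$ — this is precisely the ``particular study of the case $\ell=2$'' and the sole reason the naive statement ``$\boldsymbol{(AbsIrr)}\iff\boldsymbol{(Irr)}$'' fails.
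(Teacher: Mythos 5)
Your proof is correct and follows exactly the route the paper intends: the complex-conjugation argument disposes of odd $\ell$, and the ``particular study of the case $\ell=2$'' that the paper leaves implicit is precisely your case analysis of the subgroups of $\GL_2(\mathbb F_2)\cong S_3$, with the order-$3$ image (irreducible over $\mathbb F_2$ but split over $\mathbb F_4$) as the one genuinely delicate case. Nothing is missing; you have simply written out the details the paper omits.
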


\subsection{Isogenies}
	In practice one can deal with $\boldsymbol{(Irr)}$ by studying the graph of isogeny
	classes. LMFDB project has computed in \cite{LMFDB}  a huge amount of elliptic curves and 
	isogenies.  We recall some well known results on this topic.

	Let $E, E'$ be elliptic curves defined over $\mathbb Q$. 
	An \emph{isogeny} is a nonconstant morphism $E\longrightarrow E'$ of abelian varieties over 
	$\mathbb Q$. The map
$$
	\begin{array}{ccc}
		\{E\rightarrow E'\text{ isogeny}\}/{\cong}	&\longrightarrow 	&\{\text{finite $\mathbb Z[\Gal(\bar{\mathbb Q}\mid \mathbb Q)]$-submodules of $E$} \}\\
		\varphi							&\longmapsto 		&Ker\,\varphi
	\end{array}
$$
	defines a bijection. 
	Hence, the torsion group $E[n]$ corresponds to the multiplication-by-n map 
	$E\overset{[n]}{\rightarrow} E$ under the bijection. 
\begin{lemma}

	Let $E/\mathbb Q$ be an elliptic curve. The following are equivalent
	\begin{enumerate}
		\item $E$ satisfies $\boldsymbol{(Irr)}$.
		\item the graph of isogeny classes of $E$ is trivial.
		\item every finite $\mathbb Z[\Gal(\bar{\mathbb Q}\mid\mathbb Q)]$-submodule of $E$ is
		 of the form $E[n]$ for some $n$.
	\end{enumerate}
\end{lemma}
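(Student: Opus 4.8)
The plan is to run everything through the bijection recalled above between rational isogenies out of $E$ and finite $\mathbb Z[\Gal(\bar{\mathbb Q}\mid\mathbb Q)]$-submodules of $E$, under which $E[n]$ corresponds to $[n]\colon E\to E$, and to prove the three conditions equivalent via the cycle $(1)\Rightarrow(3)\Rightarrow(2)\Rightarrow(1)$. The only external input is the classical fact that $\mathrm{End}_{\mathbb Q}(E)=\mathbb Z$ for every elliptic curve over $\mathbb Q$; in particular a rational isogeny whose kernel is cyclic of prime order $q$ cannot have target $\mathbb Q$-isomorphic to $E$ (such an isogeny would be an element of $\mathrm{End}_{\mathbb Q}(E)$ of degree $q$ with cyclic kernel), so it genuinely enlarges the isogeny graph.

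For $(1)\Rightarrow(3)$, given a finite $\Gal(\bar{\mathbb Q}\mid\mathbb Q)$-submodule $C\subseteq E$ I would decompose it into $\ell$-primary parts $C=\bigoplus_\ell C_\ell$ with $C_\ell=C\cap E[\ell^\infty]$, each again a submodule, and reduce to showing $C_\ell=E[\ell^{k}]$ when $\ell^{k}$ is the exponent of $C_\ell\neq 0$. This is an induction on $k$: the subgroup $C_\ell\cap E[\ell]$ is a nonzero submodule of $E[\ell]$, hence equals $E[\ell]$ by $\boldsymbol{(Irr)}$; the image $\ell C_\ell$ under multiplication by $\ell$ is a submodule of exponent $\ell^{k-1}$, hence equals $E[\ell^{k-1}]$ by induction (vacuously $0$ when $k=1$); and the exact sequence $0\to C_\ell\cap E[\ell]\to C_\ell\xrightarrow{\ell}\ell C_\ell\to 0$ then gives $\#C_\ell=\ell^{2}\cdot\ell^{2(k-1)}=\#E[\ell^{k}]$, so $C_\ell=E[\ell^{k}]$ since $C_\ell\subseteq E[\ell^{k}]$. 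Reassembling, $C=E[n]$ with $n=\prod_\ell\ell^{k_\ell}$.

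For $(3)\Rightarrow(2)$: via the bijection, $(3)$ says every rational isogeny out of $E$ has kernel some $E[n]$, hence factors as $E\xrightarrow{[n]}E\xrightarrow{\sim}E'$ over $\mathbb Q$; so every curve $\mathbb Q$-isogenous to $E$ is $\mathbb Q$-isomorphic to $E$, and since no submodule has prime order there is no prime-degree isogeny either, so the graph is a single vertex with no edges. For $(2)\Rightarrow(1)$: if $\boldsymbol{(Irr)}$ failed, some $E[\ell]$ would contain a $\Gal(\bar{\mathbb Q}\mid\mathbb Q)$-stable line $C$, necessarily of order $\ell$; this $C$ is the kernel of a rational $\ell$-isogeny $E\to E/C$, and by the remark on $\mathrm{End}_{\mathbb Q}(E)$ the curve $E/C$ is not $\mathbb Q$-isomorphic to $E$, so the isogeny graph acquires an edge and is nontrivial.

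I do not expect a serious obstacle here: the fiddliest part is the exponent bookkeeping in $(1)\Rightarrow(3)$, and the one conceptual point to pin down is precisely what ``trivial graph'' means. Isolating $\mathrm{End}_{\mathbb Q}(E)=\mathbb Z$ at the outset — which guarantees that a stable line in some $E[\ell]$ really produces a new vertex rather than a loop — is exactly what makes $(2)$ interchangeable with the absence of prime-order stable subgroups, i.e.\ with $\boldsymbol{(Irr)}$.
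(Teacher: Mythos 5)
Your proof is correct, but it runs the equivalences in the opposite cycle from the paper and with different key steps, so it is worth comparing. The paper proves $(1)\Rightarrow(2)\Rightarrow(3)\Rightarrow(1)$: its substantive step is $(1)\Rightarrow(2)$, where an arbitrary isogeny $\varphi\colon E\to E'$ is factored through the maximal $[n]$ with $E[n]\subseteq\ker\varphi$ and irreducibility of each $E[p]$ forces the residual isogeny to have trivial kernel; then $(2)\Rightarrow(3)$ is where $\mathrm{End}_{\mathbb Q}(E)=\mathbb Z$ enters (an isomorphism $h\colon E'\to E$ makes $h\circ\varphi=[n]$, so the given submodule is $E[n]$), and $(3)\Rightarrow(1)$ is immediate. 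You instead prove $(1)\Rightarrow(3)$ directly by a purely Galois-module d\'evissage (primary decomposition, induction on the exponent, and the order count from the sequence $0\to C_\ell\cap E[\ell]\to C_\ell\to\ell C_\ell\to 0$), deduce $(3)\Rightarrow(2)$ from the bijection plus a degree count, and close with a short contrapositive $(2)\Rightarrow(1)$ using $\mathrm{End}_{\mathbb Q}(E)=\mathbb Z$ to see that a stable line of order $\ell$ yields a genuinely new vertex, since a prime is not a square. So the same two inputs appear in both arguments --- irreducibility of the $E[\ell]$ and $\mathrm{End}_{\mathbb Q}(E)=\mathbb Z$ --- but deployed at different implications: the paper uses the endomorphism fact in $(2)\Rightarrow(3)$, you use it in $(2)\Rightarrow(1)$ and to rule out loops. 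What your route buys is a self-contained classification of all finite submodules under $\boldsymbol{(Irr)}$ with explicit bookkeeping, and an explicit discussion of what ``trivial graph'' means (no extra vertices and no edges/loops), a point the paper's $(1)\Rightarrow(2)$ leaves implicit; what the paper's route buys is that it never needs to analyze general finite submodules directly, staying entirely at the level of isogeny factorizations. Both are complete; your appeal to $\mathrm{End}_{\mathbb Q}(E)=\mathbb Z$ is the same unproved classical fact the paper itself invokes when it writes $h\circ\varphi=[n]$.
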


\begin{proof}
	Let $G_{\mathbb Q}$ denote the absolute Galois group $\Gal(\bar{\mathbb Q}\mid \mathbb Q)$. 
	We will prove that $(1)\Rightarrow (2) \Rightarrow (3) \Rightarrow (1)$. 
	For the first implication let $E\overset{\varphi}{\rightarrow} E'$ be an isogeny. 
	Then there exists a maximal $n$ such that $\varphi$ factors as 
$$
	E\overset{[n]}{\longrightarrow} E \overset{\psi}{\longrightarrow} E'
$$
	for some isogeny $\psi:E\rightarrow E'$. It can be checked that $n$ is the biggest integer 
	satisfying $E[n] \subset Ker\, \varphi$.
	Let $r=\# Ker\, \psi$. If $p\mid n$ then $E[p]\cap Ker\, \psi$ is a nontrivial subrepresentation of 
	$E[p]$. 
	If $E$ satisfies $\boldsymbol{(Irr)}$ then $r=1$ and $\psi$ is an isomorphism.\\
	For the second let $H$ be a finite $\mathbb Z[G_{\mathbb Q}]$-submodule of $E$. 
	It corresponds to some isogeny $E\overset{\varphi}{\rightarrow} E'$ with kernel equal to $H$. 
	By hypothesis $E$ and $E'$ are isomorphic say $E'\overset{h}{\rightarrow} E$. 
	Thus $h\circ \varphi$ is an endomorphism of $E$ defined over $\mathbb Q$ and hence 
	$h\circ \varphi = [n]$ for some $n$. The last implication is trivial.
\end{proof}

	If the isograph is unkown one can still do something. In 1978 Barry Mazur proved 
	(see \cite{Mazur}) the 

\begin{theorem}[B. Mazur]
	Let $E/\mathbb Q$ be an elliptic curve and let $\ell$ be a prime such that $E[\ell]$ is reducible. 
	Then 
$$
	\ell\in \mathcal T:=\{2, 3, 5, 7, 11, 13, 17, 19, 37, 43, 67, 163\}.
$$
\end{theorem}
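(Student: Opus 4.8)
The plan is to recast the hypothesis as a statement about rational points on a modular curve and then appeal to Mazur's study of the Eisenstein ideal. If $E[\ell]$ is reducible then $G_{\mathbb Q}$ preserves a line in $E[\ell]$, equivalently $E$ admits a $\mathbb Q$-rational cyclic subgroup of order $\ell$, equivalently a $\mathbb Q$-rational $\ell$-isogeny; this is exactly a non-cuspidal point of $X_0(\ell)(\mathbb Q)$. So it suffices to show that $X_0(\ell)$ has no non-cuspidal rational point whenever $\ell\notin\mathcal T$.

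First I would dispose of the small primes. For $\ell\in\{2,3,5,7,13\}$ the curve $X_0(\ell)$ has genus $0$ with a rational cusp, hence infinitely many rational points; for $\ell\in\{11,17,19\}$, $X_0(\ell)$ is an elliptic curve of Mordell--Weil rank $0$ over $\mathbb Q$, so $X_0(\ell)(\mathbb Q)$ is a finite group which one lists explicitly, finding non-cuspidal points in each case. Thus $\{2,3,5,7,11,13,17,19\}\subset\mathcal T$ and there is nothing to prove for these. The remaining members $37,43,67,163$ of $\mathcal T$ are genuinely exceptional: for $43,67,163$ the non-cuspidal rational points come from the elliptic curves with complex multiplication by the imaginary quadratic orders of class number one and discriminant $-\ell$, while $\ell=37$ is handled by an ad hoc computation; ruling out any further points for these four primes needs a separate argument.

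For the remaining primes ($\ell\ge 23$, $\ell\ne 37,43,67,163$) the heart of the matter is Mazur's Eisenstein-ideal technology. One passes to the Jacobian $J_0(\ell)$, the Hecke algebra $\mathbb T$ acting on it, and the Eisenstein ideal $I\subset\mathbb T$; set $\tilde J=J_0(\ell)/IJ_0(\ell)$, the Eisenstein quotient. Composing the Abel--Jacobi embedding $X_0(\ell)\hookrightarrow J_0(\ell)$, $x\mapsto[(x)-(\infty)]$, with the projection $J_0(\ell)\twoheadrightarrow\tilde J$ sends a hypothetical non-cuspidal rational point to a rational point of $\tilde J$. One proves (i) that $\tilde J(\mathbb Q)$ is finite, i.e. the Eisenstein quotient has rank $0$ over $\mathbb Q$, and (ii) that the image of the cuspidal class $(0)-(\infty)$ generates $\tilde J(\mathbb Q)$ and has order the numerator of $(\ell-1)/12$. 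Then a ``formal immersion'' argument at a suitable auxiliary prime $q$ of good reduction shows that a non-cuspidal rational point congruent to the cusp $0$ modulo $q$ must equal $0$ — a contradiction — while $q$ can be chosen so that every rational point does reduce to a cusp modulo $q$. This closes the argument.

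The main obstacle is step (i): establishing that the Eisenstein quotient has Mordell--Weil rank $0$ over $\mathbb Q$. Mazur achieves this not through $L$-values but through a delicate $I$-adic descent on $\tilde J$, which is the technical core of \cite{Mazur}; making the formal-immersion criterion effective, and dispatching $\ell=37,43,67,163$ by hand, are the other non-routine points. Since the whole argument is carried out in \cite{Mazur}, in this paper the statement will be used only in the form quoted above.
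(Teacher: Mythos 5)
The paper offers no proof of this statement: it is quoted as Mazur's theorem with a bare reference to \cite{Mazur}. Your outline is an accurate summary of Mazur's actual argument --- the translation of reducibility of $E[\ell]$ into a non-cuspidal point of $X_0(\ell)(\mathbb Q)$, the genus-zero and rank-zero elliptic cases, the CM points for $43,67,163$ and the special case $\ell=37$, and the Eisenstein-quotient plus formal-immersion machinery for the remaining primes --- and, like the paper, you correctly defer the technical core to \cite{Mazur}, so the two treatments agree.
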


	Hence, there is a complete list of possible irreducible submodules of $E[\ell]$.
	We will use Mazur's theorem later  in order to exhibit a family of elliptic curves
	satisfying $\boldsymbol{(Irr)}$.

\subsection{Twists}

	Condition $\boldsymbol{(AbsIrr)}$ is invariant under 
	$\bar{\mathbb Q}$-isomorphism. This follows from the fact that Galois
	representations attached to $\bar{\mathbb Q}$-isomorphic rational
	elliptic curves differ by finite character. 
	The useful invariant in this context is the $j$-invariant. More precisely, the map
$$
	\begin{array}{cccc}
		j: &Ell:=\{E/\mathbb Q \text{ elliptic curve }\}/\cong_{\bar{\mathbb Q}}&\longrightarrow &\mathbb Q\\
		&E								&\longmapsto 	&j(E)
	\end{array}
$$
	is a bijection, hence $\boldsymbol{(AbsIrr)}$ is codified in the $j$-invariant.
	
\begin{definition}
	Let $a/b\in \mathbb Q$, with $a,b$ coprime integers. The \emph{Weil height} of $a/b$ is
	$$
	h(a/b) := \max \{ |a|, |b|\}.
	$$
	Let $S$ be a subset of $Ell$. We say that $S$ has \emph{Weil density} $d$
	if
	$$
		\lim_{n\rightarrow \infty}\dfrac{\#\{E\in S: h(j(E))\leq n\}}{\#\{x\in \mathbb Q: h(x)
		\leq n\} }=d.
	$$
	
\begin{proposition}\label{density}
	Let $S$ be the set elliptic curves satisfying $\boldsymbol{(AbsIrr)}$ modulo isomorphism.
	Then $S$ has Weil density $1$.
\end{proposition}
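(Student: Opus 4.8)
The plan is to transfer the statement to the $j$-line and reduce it to counting rational points of bounded height on a handful of explicit curves. First I would check that every condition making up $\boldsymbol{(AbsIrr)}$ depends on $E$ only through $j(E)$: a quadratic twist replaces $E[\ell]$ by $E[\ell]$ tensored with a quadratic character, so $E[\ell]$ is reducible for one twist if and only if it is for all, and it leaves $\mathbb{Q}(E[2])$ unchanged; this is consistent with the already-noted $\bar{\mathbb{Q}}$-isomorphism invariance of $\boldsymbol{(AbsIrr)}$. Using the bijection $j\colon Ell\xrightarrow{\ \sim\ }\mathbb{Q}$, it then suffices to prove that
$$
\mathcal{B}:=\{\, j\in\mathbb{Q} : \text{the curves with that $j$-invariant fail }\boldsymbol{(AbsIrr)}\,\}
$$
satisfies $\#\{j\in\mathcal{B}:h(j)\le n\}=o(n^2)$, since $\#\{x\in\mathbb{Q}:h(x)\le n\}\asymp n^2$. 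By the Lemma characterizing $\boldsymbol{(AbsIrr)}$ for elliptic curves together with Mazur's theorem, $\mathcal{B}=\mathcal{B}_0\cup\bigcup_{\ell\in\mathcal{T}}\mathcal{B}_\ell$, where $\mathcal{B}_\ell$ is the set of $j$ with $E[\ell]$ reducible and $\mathcal{B}_0$ the set of $j$ with $[\mathbb{Q}(E[2]):\mathbb{Q}]<6$. It is essential here that Mazur makes this a \emph{finite} union.

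Next I would bound each piece. An elliptic curve over $\mathbb{Q}$ with $E[\ell]$ reducible has a rational cyclic $\ell$-isogeny, hence gives a rational point of $X_0(\ell)$ above $j$, so $\mathcal{B}_\ell$ lies in the image of $X_0(\ell)(\mathbb{Q})$ under the degree-$(\ell+1)$ forgetful map $X_0(\ell)\to X(1)$. For $\ell\in\{2,3,5,7,13\}$ one has $X_0(\ell)\cong\mathbb{P}^1_{\mathbb{Q}}$ and the map is a rational function of degree $\ell+1\ge 3$; since the Weil height satisfies $h(\phi(x))\asymp h(x)^{\deg\phi}$ for rational maps $\phi\colon\mathbb{P}^1\to\mathbb{P}^1$, this gives $\#\{j\in\mathcal{B}_\ell:h(j)\le n\}=O(n^{2/(\ell+1)})$. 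For $\ell\in\{11,17,19,37,43,67,163\}$ the curve $X_0(\ell)$ has genus $\ge1$ with only finitely many rational points (rank-zero elliptic curves for $\ell=11,17,19$, and Faltings / explicit computation for the genus $\ge2$ cases), so $\mathcal{B}_\ell$ is finite. For $\mathcal{B}_0$, write the $2$-division polynomial of a model of $E$ as a cubic $g_j\in\mathbb{Q}(j)[x]$ whose splitting field is $\mathbb{Q}(E[2])$; its Galois group over $\mathbb{Q}(j)$ is all of $S_3$ (the cover $X(2)/X(1)$ being $S_3$-Galois), so a $j\in\mathcal{B}_0$ either makes $g_j$ reducible --- so $j\in\mathcal{B}_2$ --- or makes $\operatorname{disc}g_j\in(\mathbb{Q}^\times)^2$, in which case $j$ lies in the image of the rational points of an explicit (rational) double cover of the $j$-line, contributing $O(n)$ points of height $\le n$ by the same height estimate. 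Summing, $\#\{j\in\mathcal{B}:h(j)\le n\}=O(n^{2/3})+O(n)+O(1)=o(n^2)$, so $\mathcal{B}$ has Weil density $0$ and $S$ has Weil density $1$.

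The step I expect to be the main obstacle is the first one --- verifying that everything bundled into $\boldsymbol{(AbsIrr)}$, in particular reducibility of each $E[\ell]$ and the degree of $\mathbb{Q}(E[2])$, is genuinely a function of $j$ --- after which the argument is a chain of standard height counts on modular curves plus one auxiliary double cover; one also has to make sure $\operatorname{disc}g_j$ is not already a square in $\mathbb{Q}(j)$ (equivalently that the generic curve has $2$-division field of degree $6$), or the last cover would be trivial and the estimate would collapse. As a cleaner but less self-contained alternative, one could note that $\mathcal{B}$ is a finite union of thin sets of type $2$ (images of $X_0(\ell)(\mathbb{Q})$ and of the double cover), hence thin, and invoke the classical fact that a thin subset of $\mathbb{P}^1(\mathbb{Q})$ has density zero for the Weil height.
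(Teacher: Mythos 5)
Your argument is correct and follows essentially the same route as the paper: reduce to the $j$-line, invoke Mazur's theorem to get a finite list of primes, bound the genus-zero cases $X_0(\ell)$, $\ell\in\{2,3,5,7,13\}$, by the height inequality for finite morphisms of curves (the paper cites Theorem B.2.5 of Hindry--Silverman for exactly this), and dispose of the positive-genus cases by finiteness of rational points. The only real difference is that you carry out explicitly the treatment of the condition $[\mathbb{Q}(E[2]):\mathbb{Q}]=6$ via the discriminant double cover of the $j$-line, a step the paper dismisses with ``one can deal similarly.''
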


\begin{proof}
	$j$-invariant morphism extends to an isomorphism $X(1)_{\mathbb Q} \rightarrow
	\mathbb P^1_{\mathbb Q}$ of rational algebraic curves. Here $X(1)_{\mathbb Q}$ 
	denotes a rational model of the trivial-level modular curve. Hence $Ell$ is the set 
	$Y(1)(\mathbb Q)\subseteq X(1)(\mathbb Q)$ of rational non-cuspidal points of $X(1)$.
	Let $p\geq 2$ be prime and let $X_0(p)_{\mathbb Q}$ a model over $\mathbb Q$ of
	the modular curve of level $\Gamma_0(p)$. We have the forgetful map
	$X_0(p)\rightarrow X(1)$ which is a morphism of algebraic curves of degree 
	$p+1$. Elliptic curves not satisfying 
	$\boldsymbol{(Irr)}$ correspond to non-cuspidal points in the image of 
	$f_p:X_0(p)(\mathbb Q) \rightarrow X(1)(\mathbb Q)$, for $p\leq 163$ by Mazur.
	Either $X_0(p)$ has genus $0$, in which case $p\in \{2,3,5,7,13\}$, or $X_0(N)$ has
	positive genus, in which case $p\in\{11, 17, 19,	37,43, 67, 163\}$ and 
	$X_0(p)(\mathbb Q)$ is finite. Image of $f_p$ has $0$ Weil density in $X(1)$ for every
	$p\leq 163$, this follows from Theorem $B.2.5$ in \cite{HindrySilverman} for the
	genus $0$ case.
	In particular elliptic curves satisfying $\boldsymbol{(Irr)}$ have density $1$. 
	One can deal similarly with the condition 
	$\dim_{\mathbb Q}\mathbb Q(E[2]) = 6$. 
\end{proof}
\end{definition}
	 
%


\section{Examples}\label{examples}
	\subsection{A family of elliptic curves}
	In this section we give a family of elliptic curves over $\mathbb Q$ satisfying 
	$\boldsymbol{(AbsIrr)}$. 
	First we find a family of elliptic curves with irreducible $2$-torsion as 
	$\bar{\mathbb F}_2[G_\mathbb Q]$-module. 
	This is done by exhibiting a family of rational cubic polynomials with symmetric Galois group. 
	Second we take a subfamily with irreducible 
	$\ell$-torsion as $\mathbb F_\ell[G_\mathbb Q]$-module, for every $\ell\in \mathcal T$. 

\begin{lemma}\label{polirred}
	Let $n\neq \pm1$ be integer such that $3n$ is not square. The polynomial 
	$P_n(X)=X^3 - 3 (n+1)X + 2(n+1)$ 
	has Galois group isomorphic to $S_3$. 
\end{lemma}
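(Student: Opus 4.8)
The plan is to show that $P_n(X) = X^3 - 3(n+1)X + 2(n+1)$ is irreducible over $\mathbb{Q}$ and that its discriminant is not a square, which together force $\Gal(P_n/\mathbb{Q}) \simeq S_3$ (the only transitive subgroups of $S_3$ being $A_3$ and $S_3$, distinguished by whether the discriminant is a square). First I would dispose of irreducibility. A cubic over $\mathbb{Q}$ is reducible iff it has a rational root, and by the rational root theorem any rational root of $P_n$ is an integer dividing $2(n+1)$. One checks directly that $P_n(1) = 1 - 3(n+1) + 2(n+1) = 1 - (n+1) = -n$ and $P_n(-1) = -1 + 3(n+1) + 2(n+1) = -1 + 5(n+1) = 5n + 4$; more usefully, writing $P_n(X) = X^3 - 3(n+1)(X - \tfrac{2}{3})$ is not integral, so instead I would argue via reduction or via a direct divisibility contradiction: if $a \in \mathbb{Z}$ is a root then $a^3 = 3(n+1)(a - \tfrac{2}{3})\cdot$\dots — cleaner is to note $a^3 = (n+1)(3a - 2)$, so $(n+1) \mid a^3$ and $3a - 2 \mid$ something; the cleanest route is that $a \mid 2(n+1)$ and substituting shows $a^3 + 2(n+1) = 3(n+1)a$, whence $a^2(a) = (n+1)(3a-2)$, forcing $\gcd$ constraints that, combined with $n \neq \pm 1$, are incompatible — here I expect a short but slightly fiddly case analysis.

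Next, the discriminant. For a depressed cubic $X^3 + pX + q$ the discriminant is $-4p^3 - 27q^2$. With $p = -3(n+1)$, $q = 2(n+1)$ this gives
$$
\Delta = -4(-3(n+1))^3 - 27(2(n+1))^2 = 108(n+1)^3 - 108(n+1)^2 = 108(n+1)^2\,n.
$$
Since $108 = 4 \cdot 27$, we have $\Delta = (6(n+1))^2 \cdot 3n$. Thus $\Delta$ is a square in $\mathbb{Q}$ if and only if $3n$ is a square. The hypothesis that $3n$ is not a square therefore guarantees $\Delta \notin (\mathbb{Q}^\times)^2$ (note $n \neq 0$ since $n \neq \pm 1$ and $3n$ not a square rules out $n=0$ as $0$ is a square — actually $n = 0$ would make $3n = 0$ a square, so it is excluded, and then $\Delta \neq 0$, consistent with irreducibility).

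Finally I would assemble the pieces: $P_n$ irreducible of degree $3$ means $\Gal(P_n/\mathbb{Q})$ is a transitive subgroup of $S_3$, hence either $A_3 \simeq \mathbb{Z}/3$ or $S_3$; and $\Gal(P_n/\mathbb{Q}) \subseteq A_3$ iff $\Delta$ is a square in $\mathbb{Q}$, which fails by the discriminant computation. Therefore $\Gal(P_n/\mathbb{Q}) \simeq S_3$. The main obstacle is the irreducibility step: the discriminant computation is mechanical, but ruling out integer roots requires a genuine (if elementary) argument exploiting $n \neq \pm 1$ — one must check that the putative small roots $\pm 1$ give $P_n(\pm 1) = -n,\ 5n+4$, neither identically zero, and then show no other integer divisor of $2(n+1)$ can be a root, presumably by bounding or by a $\gcd$ argument on $a^2 \cdot a = (n+1)(3a-2)$.
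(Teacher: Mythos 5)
Your overall strategy is the same as the paper's: prove $P_n$ is irreducible and that its discriminant is not a rational square, so that the Galois group is all of $S_3$ rather than $A_3$. Your discriminant computation is correct and agrees with the paper's ($\Delta = 108\,n(n+1)^2 = (6(n+1))^2\cdot 3n$, versus the paper's $\Delta_n = 3n\cdot 36(n+1)^2$), and your remark that the hypothesis ``$3n$ is not a square'' automatically excludes $n=0$ is exactly what makes the stated hypotheses sufficient.

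The genuine gap is the irreducibility step, which you explicitly leave as ``a short but slightly fiddly case analysis'' without carrying it out. You correctly reduce to showing that no integer $a$ satisfies $a^3=(n+1)(3a-2)$ when $n\notin\{-1,0,1\}$, but none of the gcd attempts you sketch is brought to a conclusion, and checking only $a=\pm1$ is not enough: for instance $a=2$ is a root when $n=1$ and $a=-2$ when $n=0$. The cleanest way to close your version of the argument is this: the relation gives $(3a-2)\mid a^3$, hence $(3a-2)\mid 27a^3=(3a)^3$; since $3a\equiv 2\pmod{3a-2}$ we get $(3a)^3\equiv 8\pmod{3a-2}$, so $(3a-2)\mid 8$. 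Thus $a\in\{-2,0,1,2\}$, and then $n+1=a^3/(3a-2)\in\{1,0,1,2\}$, i.e.\ $n\in\{-1,0,1\}$, all excluded by hypothesis. The paper reaches the same list by a different elementary route: it writes a putative integral factorization $(X-a)(X^2+bX+c)$, equates coefficients, and determines the finitely many integer points on the resulting conic $2a^2+3ac-2c=0$. Either method works; yours just needs the divisibility argument above actually written down.
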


\begin{proof}
	Let us see that $P_n$ is irreducible over $\mathbb Q$ when $n\neq 0,\pm1$. 
	Consider a factorization $P_n(X)=(X-a)(X^2 + b X + c)$ over the integers. 
	By equating coefficients we have that
$$
	\begin{cases}
		a				&=b\\
		2a^2 + 3 a c - 2c 	&=0\\
		-ac 				&= 2(n+1)
	\end{cases}
$$
	The conic 
	$0=18 (2 X^2 + 3 X Y-2Y)=(6X + 9Y + 4)(6X - 4)+16$
	 has finitely many integer points, namely 
$$
	(a,b)\in \{(0,0), (-2,1), (1,-2), (2,-2)\}.
$$
	In particular $P_n$ is irreducible if and only if $n \notin\{ -1,0,1\}$. 
	In this case either $P_n$ has Galois group of order three or $P_n$ has Galois group 
	isomorphic to $S_3$, the latter corresponds to the nonsquare discriminant case. 
	The discriminant of $P_n$ is $\Delta_n=3n\cdot36(n+1)^2$ and the lemma follows.
\end{proof}


\begin{lemma}
	Consider the elliptic curve defined over $\mathbb F_{1427}$ given by the equation
$$
	\bar E:Y^2 = X^3 + 3\cdot 11 X - 2\cdot 11.
$$
	Then $\bar{E}[\ell]$ is irreducible over $\mathbb F_\ell$ for every $\ell \in \mathcal T$.
\end{lemma}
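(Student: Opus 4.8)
The plan is to reduce the statement to a finite computation with the Frobenius of $\bar E$ over $\mathbb F_{1427}$. First one checks that $\bar E$ really is an elliptic curve: the discriminant of its Weierstrass model is $-16\bigl(4\cdot 33^3+27\cdot 22^2\bigr) = -2^8\cdot 3^4\cdot 11^2$, which is a unit in $\mathbb F_{1427}$ because $1427$ is a prime distinct from $2,3,11$. For the same reason $1427\notin\mathcal T$, so for every $\ell\in\mathcal T$ the group $\bar E[\ell]$ is a free $\mathbb F_\ell$-module of rank $2$ carrying a continuous action of $\Gal(\overline{\mathbb F}_{1427}\mid\mathbb F_{1427})$, a profinite group topologically generated by the Frobenius $\phi$. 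Hence $\bar E[\ell]$ is irreducible over $\mathbb F_\ell$ if and only if $\phi$ has no eigenline, i.e. if and only if the characteristic polynomial $P(X)=X^2-aX+1427$ of $\phi$ has no root in $\mathbb F_\ell$ after reduction mod $\ell$; here $a=1428-\#\bar E(\mathbb F_{1427})$ is the trace of Frobenius, and $|a|\le 2\sqrt{1427}<76$ by Hasse. Since $P$ is quadratic, having no root modulo $\ell$ is the same as being irreducible modulo $\ell$.

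So the proof amounts to: (1) computing $\#\bar E(\mathbb F_{1427})$ --- for instance by a direct point count or by Schoof's algorithm --- thereby obtaining the integer $a$; (2) for each odd $\ell\in\mathcal T$, checking that the discriminant $a^2-4\cdot 1427$ is a non-square modulo $\ell$; and (3) for $\ell=2$, noting that $1427$ is odd, so $P(X)\equiv X^2+aX+1\pmod 2$, which is irreducible exactly when $a$ is odd, equivalently when the cubic $X^3+33X-22$ has no root in $\mathbb F_{1427}$ (no $\mathbb F_{1427}$-rational $2$-torsion on $\bar E$). Reading off these twelve verifications gives that $P\bmod\ell$ is irreducible, hence $\bar E[\ell]$ is irreducible, for every $\ell\in\mathcal T$.

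The content of the lemma is exactly this numerical check --- there is nothing conceptual left once the reduction above is in place --- and the real point is that the curve, which is the specialisation at $n=-12$ of the $S_3$-family $Y^2=X^3-3(n+1)X+2(n+1)$ from Lemma~\ref{polirred} (reduced mod $1427$), has been chosen so that the single trace $a$ is simultaneously odd and of non-square discriminant modulo each of the twelve primes in $\mathcal T$. In the write-up I would record the value of $a$ together with a short table of the relevant quadratic residue symbols; the point count is reproducible in seconds. This lemma is the local input that upgrades to condition $\boldsymbol{(Irr)}$: for an elliptic curve $E/\mathbb Q$ with good reduction at $1427$ reducing to $\bar E$ and with $\ell\neq 1427$, the $\ell$-torsion is unramified at $1427$ and the decomposition group there acts through $\phi$, so a $G_{\mathbb Q}$-stable line in $E[\ell]$ would yield a $\phi$-stable line in $\bar E[\ell]$ --- excluded for $\ell\in\mathcal T$ by this lemma, and excluded for $\ell\notin\mathcal T$ by Mazur's theorem --- whence $\boldsymbol{(Irr)}$ for $E$.
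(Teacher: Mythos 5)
Your reduction is exactly the paper's: compute the trace of Frobenius $a$ and check that $X^2-aX+1427$ is irreducible modulo each $\ell\in\mathcal T$. The paper carries out step (1) for you: it records $\#\bar E(\mathbb F_{1427})=1424$, i.e.\ $a=4$, and then asserts that $X^2-4X+1427$ is irreducible over $\mathbb F_\ell$ for every $\ell\in\mathcal T$. Two comments. First, as you yourself say, the lemma \emph{is} the numerical check, and you have not performed it; a complete proof must exhibit $a$ and the table of quadratic residue symbols. For the odd primes of $\mathcal T$ the check does go through: the discriminant is $16-4\cdot 1427=-4\cdot 1423$, and $-1423$ is a nonsquare modulo each of $3,5,7,11,13,17,19,37,43,67,163$.

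Second, and more importantly, your (correct) criterion at $\ell=2$ exposes a genuine error in the statement. With $a=4$ even, $X^2-4X+1427\equiv (X+1)^2\pmod 2$, which is reducible; equivalently, $\#\bar E=1424$ is even, so $\bar E$ has a rational point of order $2$, the cubic $X^3+33X-22$ has a root in $\mathbb F_{1427}$, and $\bar E[2]$ has a Frobenius-stable line. So the lemma is false for $\ell=2$ and should be stated only for the odd primes of $\mathcal T$. This does not damage the application: in the subsequent theorem the irreducibility of $E_k[2]$ is supplied globally by Lemma~\ref{polirred} (the cubic has Galois group $S_3$ over $\mathbb Q$), not by reduction modulo $1427$. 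But your write-up, like the paper's, must exclude $\ell=2$ from the local argument at $1427$.
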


\begin{proof}
	It can be checked that $\# \bar E = 1424$. Let $\varphi$ denote the Frobenius over $1427$, 
	then $\varphi$ satisfies
$$
	\varphi^2  - 4 \varphi + 1427=0
$$
	as an endomorphism of $\bar E$. The polynomial  $X^2 - 4X + 1427$ is irreducible over 
	$\mathbb F_\ell$ for every $\ell\in \mathcal T$ and hence $\bar E[\ell]$ is irreducible.
\end{proof}

\begin{theorem}
	Let $n$ be an integer such that
$$
	k\equiv -11 \pmod{1427}.
$$
	Then the elliptic curve given by the equation 
$$
	E_k:Y^2 = X^3 - 3 k X + 2k
$$
	satisfyes $\boldsymbol{(AbsIrr)}$. In particular it is Galois-congruent to infinitely many 
	newforms.
\end{theorem}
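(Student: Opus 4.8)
The plan is to verify, for the newform $f=f(E_k)$ attached to $E_k$, the two conditions appearing in the lemma that characterizes $\boldsymbol{(AbsIrr)}$ for rational elliptic curves: that $E_k$ satisfies $\boldsymbol{(Irr)}$ and that $\mathbb Q(E_k[2])$ has degree $6$ over $\mathbb Q$. Granting this, the ``in particular'' clause follows by applying Theorem \ref{thm1} at each prime $p>2$ of good reduction (there are infinitely many, since only finitely many primes divide the conductor $N$ of $E_k$): one obtains a newform $g_p$ of some level $M_p p$ with $M_p\mid N$ that is Galois-congruent to $f$, and for $p>N$ the level $M_p p$ is divisible by $p$ but by no other prime exceeding $N$, so the $g_p$ are pairwise distinct.

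The key elementary observation is that $k\equiv -11\pmod{1427}$ says precisely that $E_k$ reduces modulo the prime $1427$ to the curve $\bar E:Y^2=X^3+3\cdot 11\,X-2\cdot 11$ of the preceding lemma, because $-3k\equiv 3\cdot 11$ and $2k\equiv -2\cdot 11$ modulo $1427$. This is a good reduction: the discriminant of this model of $E_k$ is $1728\,k^2(k-1)$, which is prime to $1427$ since $1427\nmid 1728$ and $k\equiv -11\not\equiv 0,1\pmod{1427}$; moreover $1427\notin\mathcal T$.

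For $\boldsymbol{(Irr)}$ at odd primes $\ell$, suppose $E_k[\ell]$ were reducible over $\mathbb F_\ell$. Then Mazur's theorem forces $\ell\in\mathcal T$, in particular $\ell\neq 1427$. By the N\'eron--Ogg--Shafarevich criterion $E_k[\ell]$ is unramified at $1427$, and the reduction map identifies it --- as a module under the decomposition group at $1427$ --- with $\bar E[\ell]$ carrying the action of $Frob_{1427}$. A $G_{\mathbb Q}$-stable line in $E_k[\ell]$ would then produce a $Frob_{1427}$-stable line in $\bar E[\ell]$, contradicting the preceding lemma, whose proof shows that $X^2-4X+1427$, the characteristic polynomial of $Frob_{1427}$, is irreducible over $\mathbb F_\ell$ for every $\ell\in\mathcal T$. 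Hence $E_k[\ell]$ is irreducible for every odd $\ell$.

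It remains to show $\mathbb Q(E_k[2])$ has degree $6$; this also settles $\boldsymbol{(Irr)}$ at $\ell=2$, since a degree-$6$ splitting field forces $X^3-3kX+2k$ to be irreducible over $\mathbb Q$, so that $E_k[2]$ has no rational $2$-torsion line. Now $X^3-3kX+2k=P_{k-1}(X)$ in the notation of Lemma \ref{polirred}, and by that lemma its Galois group is $S_3$ --- equivalently its splitting field has degree $6$ --- provided $k-1\neq\pm1$ and $3(k-1)$ is not a perfect square. The first holds since $k\equiv-11\pmod{1427}$ gives $k\neq 0,2$; the second, which is the crux, is exactly what the choice of modulus is designed for: $3(k-1)\equiv -36\pmod{1427}$, and $-36=-(6^2)$ is a quadratic non-residue modulo $1427$ because $1427\equiv 3\pmod 4$ makes $-1$ a non-residue there. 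Since a perfect square is congruent to $0$ or to a quadratic residue modulo every prime, and $3(k-1)\not\equiv 0\pmod{1427}$, it cannot be a square. Therefore $\mathbb Q(E_k[2])$ has degree $6$, $E_k$ satisfies $\boldsymbol{(Irr)}$, and the characterizing lemma yields $\boldsymbol{(AbsIrr)}$ for $f(E_k)$. The main obstacle is precisely making the $S_3$-condition hold \emph{uniformly} over the whole infinite congruence class of $k$ rather than for a single value; reducing modulo a prime $\equiv 3\pmod 4$ at which a model with the right Frobenius lives is the device that makes this automatic.
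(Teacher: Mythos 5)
Your proof is correct and follows essentially the same route as the paper: Lemma \ref{polirred} for the degree-$6$ condition on $\mathbb Q(E_k[2])$ (via the non-residue computation mod $1427$, where you correctly check $3(k-1)\equiv -36$, the quantity actually required by that lemma), and Mazur plus the unramifiedness of $E_k[\ell]$ at $1427$ to reduce irreducibility for $\ell\in\mathcal T$ to the Frobenius computation on $\bar E$. You simply spell out the details (good reduction at $1427$, the identification of the reduction with $\bar E$, and the derivation of ``infinitely many newforms'' from Theorem \ref{thm1}) that the paper's two-line proof leaves implicit.
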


\begin{proof}
	Since $-12$ is not a square in $\mathbb F_{1427}$ Lemma \ref{polirred} applies 
	and since $E_k[\ell]$ is unramified over $1427$ for every $\ell \in \mathcal T$
	the theorem follows.
\end{proof}
\begin{remark}
	Notice that Theorem \ref{avoid} together with Lemma \ref{rationalavoid} say that every 
	level-raising of $E_k$ at $p>2$ can be done far from $p$. 
	This together with Corollary \ref{congGcong} implies that
	odd level-raisings of $E_k$ can be chosen not congruent.
\end{remark}
\subsection{Control of $M$}
	Let $f$ be a newform of level $N$ and let $\mathfrak l $ be a prime. If 
	$\bar N =N(\bar\rho_{f,\mathfrak l})$ 
	denotes the  prime-to-$\ell$ conductor of $\bar\rho_{f,\mathfrak l}$ then $\bar N\mid N$. 
	With this in mind we manage in next theorem to take $M=N$. 
	
\begin{theorem}\label{disc}
	Let $E/\mathbb Q$ be an elliptic curve such that
	\begin{enumerate}[(i)]
		\item $E$ has trivial graph of isogeny classes,
		\item $\mathbb Q(E[2])$ has degree $6$ over $\mathbb Q$, 
		\item $E$ is semistable with good reduction at $2$,
		\item $\Delta(E)$ is square-free.
	\end{enumerate}
	Let $N$ denote the conductor of $E$ and let $p\nmid N$ be a prime.
	Then there exists some newform $g\in S_2(Np)$ Galois-congruent to $f(E)$.
\end{theorem}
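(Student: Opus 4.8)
The plan is to combine Theorem \ref{thm1} with the conductor-comparison remark preceding the statement, using the hypotheses to force the divisor $M$ produced by Ribet's theorem to be the full level $N$. First I would invoke Theorem \ref{thm1}: conditions (i) and (ii) are exactly (via the Lemma in Section \ref{section curves}) the statement that $f(E)$ satisfies $\boldsymbol{(AbsIrr)}$, and since $p\nmid N$ and we may take $p$ odd (if $p=2$ one checks $a_2(f)^2\neq 8$ separately, but square-free discriminant and good reduction at $2$ make $E$ have good reduction at $2$ with $a_2\in\{-2,-1,0,1,2\}$, so $a_2^2\le 4<8$), Theorem \ref{thm1} yields a prime $\mathfrak l$ and a newform $g\in S_2(Mp)$ for some $M\mid N$ with $\bar\rho_{f,\mathfrak l}\simeq\bar\rho_{g,\mathfrak l}$. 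The task is to upgrade $M$ to $N$.

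The key point is the remark just before the theorem: if $\bar N=N(\bar\rho_{f,\mathfrak l})$ denotes the prime-to-$\ell$ Artin conductor of $\bar\rho_{f,\mathfrak l}$, then $\bar N\mid M$ (since $g$ is new of level $Mp$ and prime-to-$p$ conductor of $\bar\rho_{g,\mathfrak l}$ divides $M$, while $\bar\rho_{g,\mathfrak l}\simeq\bar\rho_{f,\mathfrak l}$ away from $\ell$), and also $\bar N\mid N$. So it suffices to show $\bar N=N$, i.e. that reduction mod $\mathfrak l$ does not lower the conductor at any prime $q\mid N$ with $q\neq\ell$, together with a separate argument at $q=\ell$ if $\ell\mid N$. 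Since $\Delta(E)$ is square-free and $E$ is semistable, $N$ is square-free and $E$ has multiplicative reduction at every $q\mid N$; then $\rho_{f,\mathfrak l}|_{G_q}$ is (up to unramified twist) an extension of $1$ by $\omega_\ell$ with ramification governed by the $q$-adic valuation of the Tate parameter, which is $v_q(\Delta_E)=1$. The conductor drops upon reduction precisely when $\ell\mid v_q(j_E)=-1$, which never happens; hence $q\| \bar N$ for every $q\mid N$, $q\neq\ell$. If $\ell\mid N$, one uses that $E$ has multiplicative reduction at $\ell$ with $v_\ell(\Delta)=1$, so $\bar\rho_{f,\mathfrak l}$ is ramified at $\ell$ in the finite/Steinberg way and still contributes $\ell^1$ to $\bar N$ (this is the standard level-lowering bookkeeping of Ribet/Carayol, and one can alternatively avoid it by noting that for $p$ large the prime $\mathfrak l$ produced can be chosen not dividing $N$, cf. Lemma \ref{potential}). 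Therefore $\bar N=N$, forcing $N\mid M\mid N$, so $M=N$.

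The main obstacle is the case $\ell\mid N$: here the prime-to-$\ell$ conductor of $\bar\rho_{f,\mathfrak l}$ literally forgets the ramification at $\ell$, so one cannot directly read off the $\ell$-part of $M$ from $\bar N$. The clean fix is to observe that $g$ being $p$-new with $\bar\rho_{g,\mathfrak l}\simeq\bar\rho_{f,\mathfrak l}$ and $f$ having multiplicative reduction at $\ell$ (with $\ell\| N$) means $\bar\rho_{f,\mathfrak l}|_{G_\ell}$ is ramified (the Tate curve gives a ramified extension, and ramification survives mod $\mathfrak l$ because $v_\ell(q_E)=1$ is prime to $\ell$), so $g$ cannot be $\ell$-old; hence $\ell\mid M$. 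Combined with the square-free analysis at the other primes this gives $M=N$. I would phrase all of this through the local description in Theorem \ref{local} and the conductor formula for Tate curves, keeping the multiplicative-reduction hypothesis (iii)–(iv) doing the real work.
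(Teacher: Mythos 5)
Your proposal follows essentially the same route as the paper's proof: square\--freeness of $\Delta(E)$ forces the prime-to-$\ell$ Artin conductor of $E[\ell]$ to equal the prime-to-$\ell$ part of $N$ (so $M\in\{N,N/\ell\}$), and the case $M=N/\ell$ is excluded by a local argument at $\ell$ coming from $v_\ell(\Delta)=1$; the paper carries this out via Theorem \ref{local}, Proposition 8.2 of \cite{Edixhoven} and Proposition 2.12 of \cite{DDT}.

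One point of your write-up is imprecise and worth fixing. You argue that since $\bar\rho_{f,\mathfrak l}\vert_{G_\ell}$ is \emph{ramified}, the form $g$ ``cannot be $\ell$-old.'' Mere ramification at $\ell$ is never an obstruction to having level prime to $\ell$: the determinant is the mod $\ell$ cyclotomic character, so every representation in sight is ramified at $\ell$, and even the extension class of a peu ramifi\'e representation is typically nontrivial on inertia. The correct dichotomy is peu ramifi\'e (finite flat) versus tr\`es ramifi\'e: a weight $2$ newform of level prime to $\ell$ with reducible local representation must be peu ramifi\'e at $\ell$ (this is the second half of Theorem \ref{local}), whereas the Tate curve gives a tr\`es ramifi\'e extension precisely because $\ell\nmid v_\ell(q_E)=v_\ell(\Delta)=1$. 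You do cite this numerical condition, so the substance is there, but the mechanism should be stated as ``tr\`es ramifi\'e, hence not finite flat, hence $\ell\mid M$'' rather than ``ramified, hence $\ell\mid M$.''
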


\begin{proof}
	Let $\mathfrak l$ be a prime and $g$ a newform in $S_2(Mp)$ such that
	$g$ is a level
	raising of $E$ over $\mathfrak l$. Let us prove that $M=N$. 
	Since $\Delta(E)$ is squre-free then $E[\ell]$ is ramified at every prime $p\mid N$,
	$p\neq \ell$, and the prime-to-$\ell$ conductor $N_\ell$ of $E$ is the prime-to-$\ell$
	conductor of $E[\ell]$ (cf. Proposition 2.12 in \cite{DDT}). In particular 
$$
	M\in \{N, N/\ell\}.
$$
	Assume that $M\neq N$, then $N=M\ell$ and $\ell\neq 2$ since $E$ has
	good reduction at $2$. Theorem \ref{local} (or Tate's $p$-adic uniformization) says that $E[\ell]\vert_{G_\ell}$ is
	reducible. In particular 
	$$
	E[\ell]\vert_{G_\ell} \simeq \bar\rho_{f,\mathfrak l}\vert_{G_\ell}\simeq
	\left(
		\begin{array}{cc}
			\omega_\ell\lambda_{a_\ell(f)^{-1}}		&*\\
											&\lambda_{a_\ell(f)}
		\end{array}
	\right)
	$$ 
	with $*$ `peu ramifié'. This together with Proposition 8.2 of \cite{Edixhoven} and
	Proposition 2.12 of \cite{DDT} leads to a contradiction.
\end{proof}
	
\begin{remarks}	
	\begin{itemize}
		\item Condition $(iii)$ is equivalent to $N$ being odd and square-free.
		\item The rational elliptic curve of conductor $43$ satisfies coditions $(i)-(iv)$.
	\end{itemize}
\end{remarks}
	
\section{An application: safe chains}
	When considering safe chains as in \cite{DieulefaitSymmetric} (Steinberg) level-raising
	at an appropriate (small) prime is a useful tool. In particular, this combined with a standard modular
	congruence gives an alternative way of introducing a ``MGD" prime to the level.
	Having a MGD prime in the level is one of the key ingredients in a safe chain.
	Therefore, one could expect to use generalizations of Theorem \ref{thm1} to build safe chains in
	more general settings. In the process of doing so one can rely on  
	tools as in \cite{DieulefaitWiese} to ensure that the condition 
	$\boldsymbol{(AbsIrr)}$ holds when required.

\section*{Acknowledgements}  
	We would like to thank Samuele Anni, Sara Arias-de-Reyna, Roberto Gualdi,
	Xavier Guitart and Xavier Xarles for helpful conversation and comments.

\end{document}